\documentclass{article}
\usepackage[utf8]{inputenc}
\usepackage{fullpage}
\usepackage{amsmath}
\makeatletter
\renewcommand*\env@matrix[1][\arraystretch]{%
	\edef\arraystretch{#1}%
	\hskip -\arraycolsep
	\let\@ifnextchar\new@ifnextchar
	\array{*\c@MaxMatrixCols c}}
\makeatother

\usepackage{amssymb,amsfonts, amsthm}
\def\acts{\curvearrowright}
\usepackage[all,arc]{xy}
\usepackage{mathrsfs}
\usepackage{hyperref}
\usepackage{cleveref}
\usepackage{enumerate}
\usepackage{mathtools}
\usepackage{tikz-cd}
\usepackage{tikz}
\usepackage{sseq}
\usepackage{tabu}
\usepackage[style=alphabetic]{biblatex}
\addbibresource{TheBibliography.bib}

\def\zz{\mathbb{Z}}

\def\cc{\mathbb{C}}

\def\pp{\mathbb{P}}

\newcommand{\homoldeg}[3]{H_{#1}\left(#2; #3\right)}

\newcommand{\cohomoldeg}[3]{H^{#1}\left(#2; #3\right)}

\newcommand{\Par}[1]{\left(#1\right)}
\newcommand{\BPar}[1]{\left[#1\right]}
\newcommand{\WBPar}[1]{\left\{#1\right\}}
\newcommand{\Abs}[1]{\left|#1\right|}
\newcommand{\Bracket}[1]{\left\langle #1\right\rangle}

\theoremstyle{plain}
\newtheorem{thm}{Theorem}[section]

\theoremstyle{definition}
\newtheorem{cor}[thm]{Corollary}
\newtheorem{prop}[thm]{Proposition}

\newtheorem{rmk}[thm]{Remark}

\title{Monodromy of the family of cubic surfaces branching over smooth cubic curves}
\author{Ad\'an Medrano Mart\'in del Campo}
\date{\today}

\begin{document}

\maketitle

\begin{abstract}
	Consider the family of smooth cubic surfaces which can be realized as threefold-branched covers of $\mathbb{P}^{2}$, with branch locus equal to a smooth cubic curve. This family is parametrized by the space $\mathcal{U}_{3}$ of smooth cubic curves in $\mathbb{P}^{2}$ and each surface is equipped with a $\mathbb{Z}/3\mathbb{Z}$ deck group action.
	
	We compute the image of the monodromy map $\rho$ induced by the action of $\pi_{1}\left(\mathcal{U}_{3}\right)$ on the $27$ lines contained on the cubic surfaces of this family. Due to a classical result, this image is contained in the Weyl group $W\left(E_{6}\right)$. Our main result is that $\rho$ is surjective onto the centralizer of the image a of a generator of the deck group. Our proof is mainly computational, and relies on the relation between the $9$ inflection points in a cubic curve and the $27$ lines contained in the cubic surface branching over it.
\end{abstract}

\section{Introduction}\label{Intro}

Families of algebraic varieties are ubiquitous in algebraic geometry. A basic but often difficult question that arises for any such family is to determine its monodromy. A classical example is the \emph{universal cubic surface} given by the smooth fiber bundle
\begin{center}
	\begin{tikzcd}
		S\arrow[r, hookrightarrow] & \mathcal{E}_{3, 3}\arrow[d] & \WBPar{\Par{S, p}\mid p\in S}\arrow[d, mapsto] \\
		& \mathcal{U}_{3, 3} & S
	\end{tikzcd}
\end{center}
where $\mathcal{U}_{3, 3}$ is the parameter space of smooth cubic surfaces in $\pp^{3}$. This bundle induces a monodromy homomorphism $\rho: \pi_{1}\Par{\mathcal{U}_{3, 3}}\to \text{Aut}\Par{\cohomoldeg{2}{S}{\zz}}$. Klein and Jordan proved that the Galois group of the equation for the $27$ lines on a cubic is precisely $W\Par{E_{6}}$, the Weyl group of $E_{6}$, which coincides with the image of the monodromy homomorphism induced by the universal family $\mathcal{E}_{3, 3}\to \mathcal{U}_{3, 3}$. A natural way to further study this family is to study subfamilies of it. In this paper we study the subfamily of smooth cubic surfaces which can be realized as cyclic branched covers of $\pp^{2}$, branching over a smooth cubic plane curve. To do this we exploit connections between datum associated to cubic surfaces, and its analogue for cubic curves. The most notable example in this paper is the relation between the $27$ lines contained in a cubic surface, and the $9$ inflection points of the curve over which it branches.

The parameter space of homogeneous degree $d$ polynomials  in variables $x, y, z$ is given by
\[\pp\Par{\text{Sym}^{d}\Par{\cc^{3}}}=\pp^{N\Par{d}}\quad\text{where }N\Par{d}=\binom{d+2}{2}-1.\]
The \emph{vanishing locus} of $f\in \pp^{N\Par{d}}$ is defined as the set $V\Par{f}=\WBPar{P\in \pp^{2}\mid f\Par{P}=0}$. The \emph{discriminant locus} is the subset $\Delta_{d}\subset \pp^{N\Par{d}}$ consisting of polynomials whose vanishing locus is singular. The parameter space of \emph{smooth} degree $d$ plane curves in $\pp^{2}$ is therefore defined as
\[\mathcal{U}_{d}=\pp^{N\Par{d}}\setminus \Delta_{d}.\]

Let $f\in \mathcal{U}_{d}$ and let $C=V\Par{f}$ be its vanishing locus. Then $\BPar{C}=d \BPar{H}\in \homoldeg{2}{\pp^{2}}{\zz}$, where $\BPar{H}$ is the hyperplane class in $\pp^{2}$. The curve $C$ is a complex codimension $1$ submanifold of $\pp^{2}$, so there exists a cyclic $k$-fold branched cover $X$ of $\pp^{2}$ with branched locus equal to $C$ if and only if $\BPar{C}$ is a multiple of $k$ of $\BPar{H}$ (see \cite{morita}, Proposition 4.10) and this is equivalent to $k\mid d$. It is a classsical result (see \cite{zariski}) that the fundamental group of the complement of a smooth degree $d$ curve in $\pp^{2}$ is cyclic of order $d$. In light of this, for $f\in \mathcal{U}_{3}$ consider the cyclic $3$-fold branched cover
\begin{center}
	\begin{tikzcd}
			X_{f} \arrow[d, "p", "\zz/3\zz"']  & \\
			\pp^{2}\arrow[r, hookleftarrow] & V\Par{f} 
	\end{tikzcd}
\end{center}
The surface $X_{f}$ can be embedded into $\pp^{3}$ as a cubic surface $V\Par{w^{3}-f}$. In particular, $\mathcal{U}_{3}$ parametrizes all such surfaces of the form $V\Par{w^{3}-f}$. We define the \emph{universal $3$-branched cover of $\pp^{2}$} as the fiber bundle
\begin{align*}
\mathcal{E}_{3}=\WBPar{\Par{P, f}\in\pp^{3}\times \mathcal{U}_{3}\mid P\in V\Par{w^{3}-f}}&\to \mathcal{U}_{3} \\
\Par{P, f}&\mapsto f
\end{align*}
where the fiber of $\mathcal{E}_{3}\to \mathcal{U}_{3}$ is diffeomorphic to a smooth cubic surface. For the sake of simplicity in calulations that will be carried on further in this paper, choose the curve $f=y^{2}z-x^{3}+xz^{2}$ as a base point in $\mathcal{U}_{3}$. The action of $\pi_{1}\Par{\mathcal{U}_{3}}$ on $\cohomoldeg{2}{V\Par{w^{3}-f}}{\zz}$ induces a monodromy homomorphism
\[\rho:\pi_{1}\Par{\mathcal{U}_{3}}\to \text{Aut}\Par{\cohomoldeg{2}{V\Par{w^{3}-f}}{\zz}}.\]
In \cite{dolglib}, Dolgachev-Libgober describe $\pi_{1}\Par{\mathcal{U}_{3}}$ as a central extension
\begin{align}\label{extension}
1\to \mathcal{H}_{3}\Par{\zz/3\zz}\to\pi_{1}\Par{\mathcal{U}_{3}}\to SL_{2}\Par{\zz}\to 1
\end{align}
of $SL_{2}\Par{\zz}$ by the \emph{$\zz/3\zz$-points of the 3-dimensional Heissenberg group}, defined as
\[\mathcal{H}_{3}\Par{\zz/3\zz}=\WBPar{\begin{pmatrix}
	1 & a & c \\
	0 & 1 & b\\
	0 & 0 & 1
	\end{pmatrix}\mid a, b, c\in \zz/3\zz}.\]
The extension (\ref{extension}) is split, so $\pi_{1}\Par{\mathcal{U}_{3}}$ has a semidirect product structure $\mathcal{H}_{3}\Par{\zz/3\zz}\rtimes_{\varphi} SL_{2}\Par{\zz}$, which we describe later in the paper.

The image of $\rho$ can be further restricted by noting that the intersection form in $\cohomoldeg{2}{V\Par{w^{3}-f}}{\zz}$ remains invariant under the action of the image of $\rho$. As explained in \Cref{FactsCubics}, this implies that the image of $\rho$ is contained in the automorphism group of the $27$ lines contained in $V\Par{w^{3}-f}$ due to an argument in \cite{harris}. On the same paper, Harris shows this group is precisely $W\Par{E_{6}}$, the Weyl group of $E_{6}$. Moreover, each fiber of $\mathcal{E}_{3}$ has a $\zz/3\zz$ deck group action induced by its cyclic branched cover structure. A generator $T$ of this deck group action is given by
\begin{align*}
T:V\Par{w^{3}-f}&\to V\Par{w^{3}-f} \\
\BPar{x:y:z:w}&\mapsto \BPar{x:y:z:e^{-2\pi i/3}w}
\end{align*}
and $T$ induces an action $\Omega$ on $\cohomoldeg{2}{V\Par{w^{3}-f}}{\zz}$ which commutes with the image of $\rho$. As we shall see, $\Omega$ is realized as the image of a generator of the center of $\mathcal{H}_{3}\Par{\zz/3\zz}$ inside $\pi_{1}\Par{\mathcal{U}_{3}}$. Altogether, this shows that one may restrict the monodromy homomorphism $\rho$ to the centralizer $C_{W(E_{6})}(\Omega)$ of $\Omega$ in $W(E_{6})$, giving:
\[\rho:\pi_{1}\Par{\mathcal{U}_{3}}\to C_{W\Par{E_{6}}}\Par{\Omega}.\]
Our main result in this paper is the following.

\begin{thm}\label{MainTheorem}
	The monodromy representation
	\[\rho:\pi_{1}\Par{\mathcal{U}_{3}}\to C_{W\Par{E_{6}}}\Par{\Omega}\]
	of the universal $3$-branched cover of $\pp^{2}$ is surjective and its image is isomorphic to the semidirect product
	\[\mathcal{H}_{3}\Par{\zz/3\zz}\rtimes_{\varphi} SL_{2}\Par{\zz/3\zz}.\]
\end{thm}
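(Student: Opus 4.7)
The plan is to exploit the split extension $\pi_{1}\Par{\mathcal{U}_{3}} \cong \mathcal{H}_{3}\Par{\zz/3\zz}\rtimes_{\varphi} SL_{2}\Par{\zz}$ and study the image of $\rho$ on each factor separately. The central geometric input is that the $27$ lines on the cubic surface $V\Par{w^{3}-f}$ break into $9$ orbits of $3$ lines under the cyclic deck group generated by the transformation whose monodromy is $\Omega$, and these $9$ orbits are canonically identified with the $9$ inflection points of the curve $V\Par{f}$: each inflection tangent of $V\Par{f}$ lifts through the branched cover to a triple of coplanar lines on $V\Par{w^{3}-f}$. Consequently, any element of $C_{W(E_{6})}\Par{\Omega}$ is determined by a permutation of the $9$ inflection points together with a compatible cyclic twist inside each $3$-orbit, and this combinatorial description is exactly what matches the centralizer with $\mathcal{H}_{3}\Par{\zz/3\zz}\rtimes_{\varphi}SL_{2}\Par{\zz/3\zz}$.

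First, I would handle the restriction of $\rho$ to the normal Heisenberg subgroup. The center of $\mathcal{H}_{3}\Par{\zz/3\zz}$ is generated by an element mapping onto $\Omega$, which cyclically permutes the three lines in each $3$-orbit while fixing the orbits setwise. The non-central generators of $\mathcal{H}_{3}\Par{\zz/3\zz}$, following the Dolgachev--Libgober description, act on the $9$ inflection points of $V\Par{f}$ as translations by the $3$-torsion group $\Par{\zz/3\zz}^{2}$ of the Jacobian of $V\Par{f}$. I would exhibit explicit loops in $\mathcal{U}_{3}$ realizing these translations (for instance by sliding $f$ along a Hesse-type pencil that rotates the inflections) and compute that they act faithfully on the $27$ lines, yielding an injection $\mathcal{H}_{3}\Par{\zz/3\zz}\hookrightarrow C_{W(E_{6})}\Par{\Omega}$.

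Next I would analyze the $SL_{2}\Par{\zz}$ factor. Its action on the $9$ inflection points is the natural action of $SL_{2}\Par{\zz}$ on $\Par{\zz/3\zz}^{2}$, so it factors through the reduction $SL_{2}\Par{\zz}\twoheadrightarrow SL_{2}\Par{\zz/3\zz}$. Lifting the standard generators of $SL_{2}\Par{\zz}$ to explicit loops in $\mathcal{U}_{3}$ and computing their induced permutations on the $27$ lines, I would verify that their images commute with $\Omega$, realize the correct automorphisms of $\rho\Par{\mathcal{H}_{3}\Par{\zz/3\zz}}$, and together with the Heisenberg image generate a subgroup isomorphic to the claimed semidirect product $\mathcal{H}_{3}\Par{\zz/3\zz}\rtimes_{\varphi}SL_{2}\Par{\zz/3\zz}$, of order $27\cdot 24=648$.

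To conclude surjectivity onto the centralizer, I would compute $\Abs{C_{W(E_{6})}\Par{\Omega}}$ directly using the conjugacy class data of $W\Par{E_{6}}$ (which has order $51840$): the element $\Omega$ lies in a class of size $80$, so its centralizer has order $648$. Since the image of $\rho$ is already shown to contain a subgroup of this size and is contained in the centralizer, the two must coincide. The main obstacle will be the explicit combinatorial bookkeeping needed to carry out the faithful-action computations: one must set up carefully the dictionary between the $27$ lines, the $9$ inflection tangents (and their lifted triples), and the roots of the $E_{6}$ lattice, so that monodromy can be tracked combinatorially as a permutation of $27$ objects and simultaneously recognized as an element of $W\Par{E_{6}}$. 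Once this dictionary is fixed, the verifications that each chosen loop commutes with $\Omega$ and that the generated group has the correct order are routine but lengthy calculations.
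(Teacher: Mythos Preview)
Your proposal is correct and follows essentially the same strategy as the paper: use the split extension of $\pi_{1}(\mathcal{U}_{3})$, compute $\rho$ on Heisenberg generators and on $SL_{2}(\zz)$ generators separately as explicit permutations of the $27$ lines, verify the generated subgroup has order $648$, and match this against $\Abs{C_{W(E_{6})}(\Omega)}=51840/80=648$. Two small points of divergence worth noting: (i) the paper realizes the Heisenberg generators not via loops but directly as linear automorphisms $X',Y'\in PSL_{4}(\cc)$ of the surface (following Dolgachev--Libgober), which bypasses any monodromy tracking for that factor; (ii) your remark that the $SL_{2}(\zz)$ action ``factors through $SL_{2}(\zz/3\zz)$'' is only immediate for the action on the $9$ inflection points, not a priori for the full action on the $27$ lines---the paper instead just computes $\Abs{\langle G_{1},G_{2}\rangle}=24$ directly and identifies the group by the classification of groups of order $24$, which is the safer route you should also take.
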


The first step towards the proof of \Cref{MainTheorem} is to find geometric representatives of a basis for $\cohomoldeg{2}{V\Par{w^{3}-f}}{\zz}$ in terms of $V\Par{f}\subset \pp^{2}$. The crucial observation is that to each of the $9$ inflection points of $V\Par{f}$, we can associate $3$ lines in $V\Par{w^{3}-f}$, all of which lie over the given inflection point.

Then we reduce the range of $\rho$ to $W\Par{E_{6}}$. The action $\Omega$ induced by the $\zz/3\zz$ deck group action gives a permutation of the $27$ lines in $V\Par{w^{3}-f}$ which permutes the line triples lying over a same inflection point disjointly. The image of $\rho$ commutes with $\Omega$ and thus the image of $\rho$ is contained in $C_{W\Par{E_{6}}}\Par{\Omega}$.

Using the semidirect product structure on $\pi_{1}\Par{\mathcal{U}_{3}}$, we choose $4$ of its elements and compute their action on the geometric datum associated to the chosen basepoint curve \[f=y^{2}z-x^{3}+xz^{2}.\]
The key step of the proof is performing these computations explicitly. The group generated by the images of the $4$ chosen elements is then shown to be isomorphic to $\mathcal{H}_{3}\Par{\zz/3\zz}\rtimes_{\varphi} SL_{2}\Par{\zz/3\zz}$. The centralizer $C_{W\Par{E_{6}}}\Par{\Omega}$ and this subgroup both have order $648$, so they must be isomorphic. Since \[\mathcal{H}_{3}\Par{\zz/3\zz}\rtimes_{\varphi} SL_{2}\Par{\zz/3\zz}<\text{Im}\Par{\rho}<C_{W\Par{E_{6}}}\Par{\Omega}\]
the three groups must be isomorphic, concluding the proof. A public repository containing the Sage \cite{sagemath} and Mathematica \cite{Mathematica} computations employed can be found in the following link:
\begin{center} \href{https://github.com/amedranomdelc/Monodromy-of-the-Family-of-Cubic-Surfaces-Branching-over-Smooth-Cubic-Curves}{Computation Repository}
\end{center}

\begin{rmk}
	The problem studied in this paper lies within a much more general context, arising from the study of universal families of degree $d$ cyclic branched covers over smooth hypersurfaces of degree $n$ in $\pp^{N}$, where $d$ divides $n$.
	\begin{center}
	\begin{tikzcd}
	X_{d, n}\arrow[r, hookrightarrow] & \mathcal{E}_{d, n, N}\arrow[d] \\
	 & \mathcal{U}_{n, N}
	\end{tikzcd}
	\end{center}
	In this paper, we study the family corresponding to the case $(d, n, N)=(3, 3, 2)$. The problem of determining the monodromy of these families has been previously studied in lower dimension. In \cite{mcmullen}, McMullen provides a description of the family of degree $d$ cyclic branched covers branching over $n$ distinct points in $\pp^{1}$, given by a smooth subvariety of degree $n$ in $\pp^{1}$. This corresponds to the case $(d, n, N)=(d, n, 1)$. He then determines for which pairs $\Par{d, n}$ the induced monodromy map is surjective onto the corresponding automorphism group with the natural restrictions that come along with the branch cover structure.
\end{rmk}

\subsection{Acknowledgements}
I would like to thank Benson Farb for his invaluable guidance, advice and support through the making of this paper, as well as for suggesting this problem. I would also like to thank Olga Medrano Mart\'in del Campo for introducing me to Sage, tool which permitted many of the computations in this paper to be carried out. I'm grateful to Eduard Looijenga, Ronno Das, Nathaniel Mayer and Reid Harris for many helpful conversations, and to the Jump Trading Mathlab Fund for providing a working space where said conversations could be held. Finally, I would like to thank Curtis McMullen, Nick Salter, Dan Margalit, Maxime Bergeron and the referee for their helpful comments to make this paper more readable.

\section{Preliminaries}\label{Prelim}

\subsection{General facts about cubic surfaces}\label{FactsCubics}
We begin by recalling facts about smooth cubic surfaces which we wil employ through the paper. These facts can be found in e.g. \cite{hartshorne}. Let $X\subset \pp^{3}$ be a smooth cubic surface.

A classical result states all smooth cubic surfaces $X$ are blowups of $\pp^{2}$ at six points $p_{1}, \ldots, p_{6}$ in general position, and moreover $X$ contains $27$ lines. Therefore, the intersection form on $\cohomoldeg{2}{X}{\zz}$ is of type $\Par{1, 6}$ and we have the decomposition
	\[\cohomoldeg{2}{X}{\zz}\cong \cohomoldeg{2}{X}{\zz}_{+}\oplus \cohomoldeg{2}{X}{\zz}_{-}\cong \zz\oplus \zz^{6}.\]
The cohomology classes of any six pairwise disjoint lines in $X$ form a basis for $\cohomoldeg{2}{X}{\zz}_{-}$. The intersection pattern of these $27$ lines is given by the dual of the Schl\"afli graph, regarding each line as a vertex, and two lines intersect if and only if their corresponding vertices are joined by an edge. Let
\begin{center}
\begin{tikzcd}
Bl_{\WBPar{p_{1}, \ldots, p_{6}}}\Par{\pp^{2}}\arrow[d, "\pi"]\arrow[r, "\cong"] & X \\
\pp^{2}
\end{tikzcd}
\end{center}
be the blowup map from $X$ to $\pp^{2}$ at the points $p_{1}, \ldots, p_{6}$. Set $e_{0}, e_{1}, \ldots, e_{6}\in \cohomoldeg{2}{X}{\zz}$ as $e_{0}=\pi^{-1}\Par{\text{PD}\BPar{H}}$, the preimage of the Poincar\'e dual of the hyperplane class $\BPar{H}$ in $\pp^{2}$, and $e_{i}$ as the class of the exceptional divisor $L_{i}$ corresponding to $p_{i}$ for $i=1, \ldots, 6$. The divisors $L_{1}, \ldots, L_{6}$ constitute $6$ of the lines contained in $X$, and the remaining $21=15+6$ lines are given by blowups of
	\begin{itemize}
		\item the $15$ lines which pass through $p_{i}$ and $p_{j}$ for all pairs $i\neq j$, and
		\item the $6$ conics determined by $5$ of the blowup points $p_{i}$.
	\end{itemize}
The lines $L_{1}, \ldots, L_{6}$ are pairwise disjoint, and thus $e_{1}, \ldots, e_{6}$ form a basis for $\cohomoldeg{2}{X}{\zz}_{-}$. With respect to the basis $\WBPar{e_{0}, e_{1}, \ldots, e_{6}}$ of $\cohomoldeg{2}{X}{\zz}$, the intersection form is given by
	\begin{align*}
	\Par{\cdot, \cdot}_{X}:\cohomoldeg{2}{X}{\zz}\times \cohomoldeg{2}{X}{\zz}&\to \zz \\
	\Par{\mathbf{a}, \mathbf{b}}&\mapsto \mathbf{a}^{T}\begin{pmatrix}
	1 & 0 \\
	0 & -I_{6} \\
	\end{pmatrix}\mathbf{b}.
	\end{align*}
Let $L_{i, j}$ be the line in $X$ which is a blowup of the line passing through $p_{i}$ and $p_{j}$, and let $L_{i^{\ast}}$ be the line in $X$ which is a blowup of the conic passing through all blowup points except for $p_{i}$. Then
	\[
		\BPar{L_{i, j}}=e_{0}-e_{i}-e_{j}\quad\text{and}\quad\BPar{L_{i^{\ast}}}=2e_{0}+e_{i}-\Par{e_{1}+\cdots+e_{6}}.
	\]
In \cite{harris}, it is shown that the group of automorphisms of $\cohomoldeg{2}{X}{\zz}$ that preserve the intersection form is isomorphic to the Galois group of $X$, and these two groups are isomorphic to $W\Par{E_{6}}$. These automorphisms are determined by the images of the cohomology classes of any six disjoint lines, such as $e_{1}, \ldots, e_{6}$. Since the intersection form is preserved, this is equivalent to a permutation of the $27$ lines in $X$.

\subsection{Inflection points of $V\Par{f}$ versus lines in $V\Par{w^{3}-f}$}\label{CorrespondanceInflectionLines}
The lines on a cubic surface $X\cong V\Par{w^{3}-f}$ have a very particular structure, which can be described in terms of the inflection points of $f$. Since $f$ is a smooth cubic curve, it has $9$ inflection points, given by the intersection $V\Par{f}\cap V\Par{\det\text{Hess}\Par{f}}$, where
\[\text{Hess}\Par{f}=\begin{pmatrix}[1.5]
\frac{\partial^{2}f}{\partial x^{2}} & \frac{\partial^{2}f}{\partial x\partial y} &  \frac{\partial^{2}f}{\partial x\partial z}\\
\frac{\partial^{2}f}{\partial y\partial x} & \frac{\partial^{2}f}{\partial y^{2}} &  \frac{\partial^{2}f}{\partial y\partial z} \\
\frac{\partial^{2}f}{\partial z\partial x} &  \frac{\partial^{2}f}{\partial z\partial y} & \frac{\partial^{2}f}{\partial z^{2}} \\
\end{pmatrix}\]
is the Hessian matrix of $f$. The structure of the lines is described in the following proposition.

\begin{prop}\label{LineTriplesOverFlexes}
	Consider a smooth cubic curve $V\Par{f}\subset \pp^{2}$ and the branched cover
	\begin{center}
		\begin{tikzcd}
		V\Par{w^{3}-f} \arrow[d, "p", "\zz/3\zz"']  & \\
		\pp^{2}\arrow[r, hookleftarrow] & V\Par{f} 
		\end{tikzcd}
	\end{center}
	Let $P$ be an inflection point of $V\Par{f}$ and let $l_{P}\subset \pp^{2}$ be the tangent line to $V\Par{f}$ at $P$. Then $p^{-1}\Par{l_{P}}$ consists of $3$ concurrent lines at $p^{-1}\Par{P}$ in $V\Par{w^{3}-f}$. Namely, the $27$ lines conatained in $V\Par{w^{3}-f}$ lie over each of the $9$ inflection points of $f$ in concurrent triples.
\end{prop}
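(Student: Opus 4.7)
The plan is to work locally around a single inflection point $P$ and factor the restriction of $w^3 - f$ to $p^{-1}(l_P)$ into three linear pieces, then conclude the global count by comparing with the classical fact that a smooth cubic surface contains exactly $27$ lines.

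First I would choose homogeneous coordinates $[x:y:z]$ on $\pp^{2}$ so that $P = [0:1:0]$ and $l_P = V(z)$. The defining property of an inflection point with tangent $l_P$ is that $l_P \cdot V(f) = 3P$ as divisors on $l_P$. Equivalently, the homogeneous cubic form $f(x,y,0)$ is a perfect cube vanishing only at $P$, so $f(x,y,0) = c x^{3}$ for some $c \in \cc^{\ast}$.

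Next I would take the preimage $p^{-1}(l_P)$ inside $V(w^{3}-f) \subset \pp^{3}$: it is cut out by the two equations $z=0$ and $w^{3}-f=0$, which in these coordinates becomes $V(z,\, w^{3}-cx^{3}) \subset \pp^{3}$. Since $\cc$ contains the cube roots of unity, the polynomial $w^{3}-cx^{3}$ splits as
\[
w^{3} - c x^{3} \;=\; \prod_{i=0}^{2}\bigl(w - \zeta^{i} c^{1/3} x\bigr), \qquad \zeta = e^{2\pi i/3}.
\]
Thus $p^{-1}(l_P)$ is the union of the three lines $\ell_{i} = V\bigl(z,\; w - \zeta^{i} c^{1/3} x\bigr) \subset \pp^{3}$ for $i=0,1,2$, each parametrized by the remaining coordinate $y$. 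All three lines pass through the common point $[0:1:0:0]$, which is exactly the single preimage $p^{-1}(P)$ (since $P$ lies on the branch locus, its fiber is a single unramified point of the triple cover). Each $\ell_i$ lies in $V(w^{3}-f)$ by construction, so these are honest lines contained in the cubic surface.

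For the global statement (the ``namely'' clause), I would simply count: the $9$ inflection points of $V(f)$ contribute $9 \times 3 = 27$ lines in this way, and no line can appear for two different inflection points since each such line determines the unique point $p(\ell_i \cap \{w=0\}) = P$ on $V(f)$. Since a smooth cubic surface contains exactly $27$ lines, these must account for all of them. The only step requiring any care is the normalization that turns the inflection condition into the clean equation $f(x,y,0)=cx^{3}$; the rest is bookkeeping about the explicit factorization of $w^{3}-cx^{3}$, so I do not anticipate a real obstacle.
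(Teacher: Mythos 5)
Your proof is correct, and it takes a cleaner route than the paper's. The algebraic heart is the same in both arguments: restrict $w^{3}-f$ to the preimage of the tangent line at a flex, observe that $f$ restricted to that tangent line is a nonzero constant times a perfect cube, and factor the resulting difference of cubes into three linear forms meeting at the single point of the fiber over $P$. The difference is in how you get there and how you finish. You normalize a single inflection point to $[0:1:0]$ with tangent $V(z)$ and invoke the divisor-theoretic characterization $l_{P}\cdot V(f)=3P$ (valid because a smooth cubic contains no lines, so $f(x,y,0)\not\equiv 0$), which handles all nine flexes at once; the paper instead passes to the Hesse normal form $x^{3}+y^{3}+z^{3}-3\mu xyz$, lists all nine inflection points and their tangent lines explicitly, and carries out the substitution for a representative tangent line. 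You also make explicit the step the paper leaves implicit: that the $9\times 3$ lines so produced are pairwise distinct (each line recovers its inflection point as the image of its intersection with $\{w=0\}$) and hence, by the classical count of $27$ lines on a smooth cubic surface, exhaust all lines on $V(w^{3}-f)$. What your version does not buy is the explicit coordinate data --- the equations of the tangent lines and of the $27$ lines in Hesse form --- which the paper needs downstream in Sections 4 and 5 to compute the monodromy matrices; that, rather than logical necessity, is why the paper's proof is computational. One small terminological slip: the fiber over $P$ is the unique \emph{ramification} point of the cover (totally ramified), not an unramified point, though this does not affect the argument.
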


\begin{proof}
	Performing a change of coordinates, $f$ can be transformed into its \emph{Hesse normal form}, which is
\[f=x^{3}+y^{3}+z^{3}-3\mu xyz\]
for some $\mu$ such that $\mu^{3}\neq 1$. We have $\det \text{Hess}\Par{f}=216\Par{1-\mu^{3}}xyz$, so the inflection points of $f$ are
\begin{center}
	\begin{tabular}{c c c}
		$\BPar{1:-1:0}$ & $\BPar{1:-\omega:0}$ & $\BPar{1:-\omega^{2}:0}$ \\
		& & \\
		$\BPar{-1:0:1}$ & $\BPar{-\omega:0:1}$  & $\BPar{-\omega^{2}:0:1}$ \\
		& & \\
		$\BPar{0:1:-1}$ & $\BPar{0:1:-\omega}$  & $\BPar{0:1:-\omega^{2}}$
	\end{tabular}
\end{center}
The tangent line to $V\Par{f}$ at a point $P$ is given by the equation
\[\nabla f_{P}\cdot \Par{x, y, z}=x\frac{\partial f}{\partial x}\Par{P}+y\frac{\partial f}{\partial y}\Par{P}+z\frac{\partial f}{\partial z}\Par{P}=0\]
and for such an $f$ we have $\nabla f=\Par{3x^{2}-3\mu yz, 3y^{2}-3\mu zx, 3z^{2}-3\mu xy}$. Hence the tangent lines at the inflection points of $f$ are
\begin{center}
	\begin{tabular}{c c c}
		$V\Par{x+y+\mu z}$ & $V\Par{x+\omega^{2}y+\mu\omega z}$ & $V\Par{x+\omega y+\mu\omega^{2}z}$ \\
		$V\Par{x+\mu y+z}$ & $V\Par{\omega^{2}x+\mu \omega y+z}$ & $V\Par{\omega x+\mu \omega^{2}y+z}$ \\
		$V\Par{\mu x+y+z}$ & $V\Par{\mu \omega x+y+\omega^{2}z}$ & $V\Par{\mu \omega^{2}x+y+\omega z}$ 
	\end{tabular}
\end{center}
in correspondence with the inflection points shown above. Consider a tangent line $L$ at one of the inflection points $P$, say $L=V\Par{\mu x+y+z}$. We now proceed to determine its preimage $p^{-1}\Par{L}$ (the preimage for all remaining lines is determined in an analogous manner). Points in $p^{-1}\Par{L}$ satisfy $y=-\mu x-z$. Combining this equation along with $f$ we obtain
\[w^{3}-x^{3}-z^{3}+\Par{\mu x+z}^{3}+3\mu xz\Par{\mu x+z}=w^{3}-\Par{1-\mu^{3}}x^{3}=0\]
and thus, letting $\eta$ be a cube root of $1-\mu^{3}$, we have that $p^{-1}\Par{L}$ consists of three lines through $p^{-1}\Par{P}$ given by
\[V\Par{w-\omega^{n}\eta x}\cap V\Par{\mu x+y+z} \subset\pp^{3} \quad\text{ for }n=0, 1, 2.\]
This is analogous for the remaining tangent lines at the inflection points of $V\Par{f}$, so to each inflection point $P$ of $V\Par{f}$, we have associated $3$ lines in $V\Par{w^{3}-f}$ passing through $p^{-1}\Par{P}$.
\end{proof}

It should be emphasized that \Cref{LineTriplesOverFlexes} is the crucial property that characterises the cubic surfaces of the form $V(w^{3}-f)$, and this property will be exploited throughout this paper.

\section{Restricting $\text{Im}\Par{\rho}$}\label{RestrictingImageSection}
In this section, we show that the image of $\rho$ is contained in the centralizer of an order $3$ element $\Omega\in W\Par{E_{6}}$.

\subsection{The action $\Omega$}\label{ActionOfOmega}
Consider the action $\tau$ of $\mathcal{H}_{3}\Par{\zz/3\zz}$ on $V\Par{f}$ coming from the extension given in \cite{dolglib}, which acts on the inflection points of $V\Par{f}$. These inflection points correspond to the $3$-torsion points of $V\Par{f}$, or those points $P$ with $3P=0$, given an elliptic curve structure on it. Moreover, $\tau$ acts by translation on the $\zz/3\zz\times \zz/3\zz$ lattice formed by these $3$-torsion points. Namely, the translation on the $\zz/3\zz\times\zz/3\zz$ lattice is given by the $a$ and $b$ entries of a matrix element of $\mathcal{H}_{3}\Par{\zz/3\zz}$ as follows:
\begin{align*}
\tau:\mathcal{H}_{3}\Par{\zz/3\zz}\acts V\Par{f}\Par{\zz/3\zz}&\cong \zz/3\zz\times \zz/3\zz \\
\begin{pmatrix}
1 & a & c \\
0 & 1 & b\\
0 & 0 & 1
\end{pmatrix} \cdot \Par{x, y}&=\Par{x+a, y+b}.
\end{align*}
We will use this action in \Cref{ImageOfHeissenbergSubsection}, further explained from the Hessian normal form of a cubic curve. The action of an element $M\in \mathcal{H}_{3}\Par{\zz/3\zz}$ under $\tau$ comes from a  linear transformation in $\pp^{2}$ \cite{dolglib}. This action can be lifted to a linear transformation on $\pp^{3}$ giving an automorphism of $V\Par{w^{3}-f}$ which induces the element in $\text{Im}\Par{\rho}$ coming from $M$. Let $Z$ be a generator of the center $Z\Par{\mathcal{H}_{3}\Par{\zz/3\zz}}$. Lifting $Z$ to an action on $\pp^{3}$ gives an automorphism $T$ which fixes the inflection points in the curve $V\Par{f}\cap V\Par{w}$, and acts on $V\Par{w^{3}-f}$ by multiplication by $\omega^{-1}=e^{-2\pi i/3}$ on the $w$-coordinate:
\[T:[x:y:z:w]\mapsto [x:y:z:\omega^{-1} w]\quad \omega={e^{\frac{2\pi i}{3}}}.\]
This is precisely a generator of the $\zz/3\zz$ deck group action on $V\Par{w^{3}-f}$. Let $\Omega$ be the element of $\text{Im}\Par{\rho}$ induced by $T$. We can now prove the following proposition.

\begin{prop}\label{ImageInCentalizerOfOmega}
	With the notation above, $\text{Im}\Par{\rho}$ is contained in the centralizer $C_{\text{Aut}\Par{\cohomoldeg{2}{V\Par{w^{3}-f}}{\zz}}}\Par{\Omega}$.
\end{prop}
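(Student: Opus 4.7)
The plan is to realise $\Omega$ as coming from a global fibrewise symmetry of the family $\mathcal{E}_{3}\to \mathcal{U}_{3}$, and then appeal to the standard principle that any bundle automorphism covering the identity on the base commutes with monodromy on cohomology.

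First I would check that the formula defining $T$ on the basepoint fibre in fact extends to a $\zz/3\zz$-action on the whole total space. Concretely, the map
\[\widetilde{T}:\mathcal{E}_{3}\to \mathcal{E}_{3},\qquad \Par{\BPar{x:y:z:w}, f}\mapsto \Par{\BPar{x:y:z:\omega^{-1}w}, f}\]
is well-defined because the equation $w^{3}-f$ is preserved under $w\mapsto \omega^{-1}w$ for \emph{every} $f\in \mathcal{U}_{3}$, and $\widetilde{T}$ commutes with the projection $\mathcal{E}_{3}\to \mathcal{U}_{3}$. By construction its restriction to the basepoint fibre $V\Par{w^{3}-f}$ is exactly the deck transformation $T$ of \Cref{ActionOfOmega}.

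Next I would compare parallel transport with and without $\widetilde{T}$. Given a loop $\gamma:\BPar{0,1}\to\mathcal{U}_{3}$ based at $f$, choose a smooth trivialisation of the bundle along $\gamma$; the resulting parallel transport diffeomorphism $\Phi_{\gamma}:V\Par{w^{3}-f}\to V\Par{w^{3}-f}$ represents $\rho\Par{\gamma}$ on cohomology. Because $\widetilde{T}$ is a bundle automorphism covering the identity on $\mathcal{U}_{3}$, conjugating the trivialisation by $\widetilde{T}$ gives another trivialisation along $\gamma$, and the two associated parallel transports are smoothly isotopic. This yields the identity
\[T\circ \Phi_{\gamma}\simeq \Phi_{\gamma}\circ T\]
as diffeomorphisms of the basepoint fibre, up to isotopy. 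Passing to $H^{2}$ one obtains $\Omega\cdot \rho\Par{\gamma}=\rho\Par{\gamma}\cdot \Omega$, i.e.\ $\rho\Par{\gamma}\in C_{\text{Aut}\Par{\cohomoldeg{2}{V\Par{w^{3}-f}}{\zz}}}\Par{\Omega}$.

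There is no real obstacle: the crux is simply to write $T$ in a form manifestly independent of the parameter $f$, which reduces the commutation to the tautological observation that fibrewise symmetries commute with monodromy. The only point deserving care is ensuring that the same generator $T$ of the deck group is used at both endpoints of $\gamma$; this is automatic from the uniformity of the formula for $\widetilde{T}$.
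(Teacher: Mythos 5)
Your argument is correct, but it takes a genuinely different route from the paper. You realise $\Omega$ geometrically, as induced by the global bundle automorphism $\widetilde{T}$ of $\mathcal{E}_{3}$ covering the identity on $\mathcal{U}_{3}$, and then invoke the standard fact that conjugating a trivialisation along a loop $\gamma$ by a fibrewise symmetry produces an isotopic parallel transport, so that $T\circ\Phi_{\gamma}\circ T^{-1}\simeq\Phi_{\gamma}$ and hence $\Omega$ commutes with $\rho\Par{\gamma}$ on $\cohomoldeg{2}{V\Par{w^{3}-f}}{\zz}$; all the ingredients check out, since $w\mapsto\omega^{-1}w$ preserves $w^{3}-f$ uniformly in $f$. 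The paper instead argues purely group-theoretically: using the Dolgachev--Libgober split extension, it observes that the $SL_{2}\Par{\zz}$-action $\Phi_{N}$ fixes the generator of $Z\Par{\mathcal{H}_{3}\Par{\zz/3\zz}}$, so this subgroup is central in $\pi_{1}\Par{\mathcal{U}_{3}}$, and since $\Omega$ is by construction the image under $\rho$ of that central generator, $\Omega$ is central in $\text{Im}\Par{\rho}$. Your approach is more self-contained and elementary: it does not require identifying $\Omega$ with the image of the centre of the Heisenberg group, nor any knowledge of the structure of $\pi_{1}\Par{\mathcal{U}_{3}}$, and it would apply verbatim to any of the families $\mathcal{E}_{d,n,N}$ mentioned in the introduction. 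What the paper's route buys in exchange is the additional information that $\Omega$ actually lies in $\text{Im}\Par{\rho}$ as the image of an explicit central element of $\pi_{1}\Par{\mathcal{U}_{3}}$, a fact used later when the relations $G_{1}H_{1}G_{1}^{-1}=\Omega H_{1}H_{2}$, etc., are exploited in the proof of the main theorem; your proof establishes centralisation without establishing membership, which is all the proposition asks for.
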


\begin{proof}
	The homomorphism $\Phi:SL_{2}\Par{\zz}\to \text{Aut}\Par{\mathcal{H}_{3}\Par{\zz/3\zz}}$ induced by the split group extension
\[1\to \mathcal{H}_{3}\Par{\zz/3\zz}\to \pi_{1}\Par{\mathcal{U}_{3}}\to SL_{2}\Par{\zz}\to 1\]
described in \cite{dolglib} is given by the composition
\begin{center}
	\begin{tikzcd}
	SL_{2}\Par{\zz}\arrow[r, "\mod{3}"]\arrow[rr, bend left=20, "\Phi"]& SL_{2}\Par{\zz/3\zz}\arrow[r, "\varphi"] & \text{Aut}\Par{\mathcal{H}_{3}\Par{\zz/3\zz}}
	\end{tikzcd}
\end{center}
where the map $\varphi$ is given by:
\begin{align*}
\varphi:SL_{2}\Par{\zz/3\zz}&\to \text{Aut}\Par{\mathcal{H}_{3}\Par{\zz/3\zz}} \\ 
M&\mapsto \BPar{\varphi_M:\begin{pmatrix}
1 & a & c \\
0 & 1 & b \\
0 & 0 & 1
\end{pmatrix}\mapsto
\begin{pmatrix}
1 & M_{1}\Par{a, b} & c \\
0 & 1 & M_{2}\Par{a, b} \\
0 & 0 & 1
\end{pmatrix}}\quad M\Par{a, b}=\Par{M_{1}\Par{a, b}, M_{2}\Par{a, b}}
\end{align*}
for all $M\in SL_{2}\Par{\zz/3\zz}$. Since the center of $\mathcal{H}_{3}\Par{\zz/3\zz}$ is given by
\[Z\Par{\mathcal{H}_{3}\Par{\zz/3\zz}}\cong \Bracket{\begin{pmatrix}
	1 & 0 & 1 \\
	0 & 1 & 0 \\
	0 & 0 & 1
	\end{pmatrix}}\]
it folows that $\Phi_{N}$ fixes $Z\Par{\mathcal{H}_{3}\Par{\zz/3\zz}}$ for every $N\in SL_{2}\Par{\zz}$. Therefore, $Z\Par{\mathcal{H}_{3}\Par{\zz/3\zz}}$ is in the center of $\pi_{1}\Par{\mathcal{U}_{3}}$. Since $\Omega\in \rho\Par{Z\Par{\mathcal{H}_{3}\Par{\zz/3\zz}}}$ by construction, it follows that $\Omega$ is in the center of $\text{Im}\Par{\rho}$, or equivalently, $\text{Im}\Par{\rho}$ is contained in the centralizer of $\Omega$.
\end{proof}

\subsection{Restriction to $W\Par{E_{6}}$}\label{InvarianceSubsection}

\begin{prop}\label{ImageInWE6}
	$\text{Im}\Par{\rho}$ is contained in $W\Par{E_{6}}$.
\end{prop}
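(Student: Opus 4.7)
The plan is to verify that every monodromy element $\rho\Par{\gamma}$ preserves both the intersection form on $\cohomoldeg{2}{V\Par{w^{3}-f}}{\zz}$ and the hyperplane class coming from the embedding $V\Par{w^{3}-f}\hookrightarrow \pp^{3}$, and then to invoke the identification from \cite{harris} already recalled in \Cref{FactsCubics}.

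For any $\gamma\in \pi_{1}\Par{\mathcal{U}_{3}}$, the map $\rho\Par{\gamma}$ is induced by parallel transport along $\gamma$ in the smooth fiber bundle $\mathcal{E}_{3}\to \mathcal{U}_{3}$, which produces a diffeomorphism of the fiber $V\Par{w^{3}-f}$ well-defined up to isotopy. Any such diffeomorphism acts on cohomology by a graded ring automorphism, and in particular preserves the cup product pairing
\[\cohomoldeg{2}{V\Par{w^{3}-f}}{\zz}\times \cohomoldeg{2}{V\Par{w^{3}-f}}{\zz}\to \cohomoldeg{4}{V\Par{w^{3}-f}}{\zz}\cong \zz.\]
Thus $\rho\Par{\gamma}$ is a lattice isometry with respect to the intersection form $\Par{\cdot,\cdot}_{X}$ described in \Cref{FactsCubics}.

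For the second piece of structure, I would observe that $\mathcal{E}_{3}$ sits by construction as a closed subbundle of the trivial bundle $\pp^{3}\times \mathcal{U}_{3}\to \mathcal{U}_{3}$. Pulling back the hyperplane class from $\cohomoldeg{2}{\pp^{3}}{\zz}$ under the first projection and restricting to $\mathcal{E}_{3}$ produces a global cohomology class on the total space whose restriction to each fiber is that fiber's hyperplane class. Being globally defined on the total space, this class is automatically fixed by the monodromy action on fiber cohomology. Consequently $\rho\Par{\gamma}$ preserves the set of $(-1)$-curve classes and hence permutes the $27$ lines of $V\Par{w^{3}-f}$, so by the result of \cite{harris} recalled in \Cref{FactsCubics} it lies in $W\Par{E_{6}}$.

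No substantive obstacle is expected: both invariance properties are immediate consequences of naturality of the cohomological constructions, and the nontrivial lattice-theoretic identification with $W\Par{E_{6}}$ has already been imported from \cite{harris}. The only point worth emphasizing is that one should use the hyperplane class coming from the $\pp^{3}$-embedding (equivalently, the anticanonical class $-K_{X}$), rather than the class $e_{0}$ of \Cref{FactsCubics}, since it is the former which is manifestly globally defined on $\mathcal{E}_{3}$.
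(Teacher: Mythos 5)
Your proof is correct, but it takes a genuinely different route from the paper's. The paper argues geometrically and leans on the special structure of this family: a loop in $\mathcal{U}_{3}$ carries the branch curve $V\Par{w^{3}-f}\cap V\Par{w}$ to itself and hence permutes the $9$ inflection points of $V\Par{f}$; by \Cref{LineTriplesOverFlexes} the $27$ lines are exactly the preimages of the tangent lines at these inflection points, so the monodromy permutes the $27$ lines and preserves their incidence, and since the line classes span $\cohomoldeg{2}{V\Par{w^{3}-f}}{\zz}$ and the form is non-degenerate, the intersection form is preserved; the identification with $W\Par{E_{6}}$ from \cite{harris} then applies. You run the implication in the opposite direction: the parallel-transport diffeomorphism preserves the cup product, hence gives a lattice isometry; the hyperplane class is the restriction of a class defined on all of $\mathcal{E}_{3}\subset\pp^{3}\times\mathcal{U}_{3}$ and is therefore monodromy-invariant; and the two together force a permutation of the $27$ line classes, characterized as $\WBPar{e\mid e\cdot e=-1,\ e\cdot h=1}$ (a characterization you should state explicitly, since it is the hinge of your last step). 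Your argument is more general --- it applies verbatim to the full universal family $\mathcal{E}_{3,3}\to\mathcal{U}_{3,3}$ and never uses the branched-cover structure --- and it is more careful on a point the paper elides: preserving the intersection form alone yields the full isometry group of the $\Par{1,6}$ lattice, which is strictly larger than $W\Par{E_{6}}$, so one really does need invariance of the (anti)canonical class, which you supply directly and the paper obtains for free from the geometric permutation of the lines. What the paper's route buys is coherence with the rest of the argument: the inflection-point/line-triple correspondence of \Cref{LineTriplesOverFlexes} is the engine of all the later computations, and its proof of this proposition is a first instance of that mechanism.
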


\begin{proof}
The action of any element $\sigma\in \pi_{1}\Par{\mathcal{U}_{3}}$ on $V\Par{w^{3}-f}$ maps $V\Par{w^{3}-f}\cap V\Par{w}$ to itself, inducing a permutation on the set of $9$ inflection points of $V\Par{f}$. This implies that the preimages of the inflection points under the branched cover $p$ are permuted as well, and we have shown these are exactly the $27$ lines in $V\Par{w^{3}-f}$. Therefore, $\sigma$ maps lines to lines, and for any line $L\subset V\Par{w^{3}-f}$ we have
\[\rho\Par{\sigma}\Par{\BPar{L}}=\BPar{\sigma\Par{L}}.\]
The incidence of the $27$ lines contained in $V(w^{3}-f)$ is therefore preserved. The classes of these lines span $\cohomoldeg{2}{V\Par{w^{3}-f}}{\zz}$ and its intersection form is non-degenerate, so $\rho$ preserves said intersection form. In \cite{harris}, it is shown that automorphisms of $\cohomoldeg{2}{V\Par{w^{3}-f}}{\zz}$ preserving the form lie within the odd orthogonal group $O_{6}^{-}\Par{\zz/2\zz}$, which is isomorphic to the Weyl group $ W\Par{E_{6}}$. Hence, $\text{Im}\Par{\rho}\subset W\Par{E_{6}}$.
\end{proof}

Summarizing, \Cref{ImageInCentalizerOfOmega} and \Cref{ImageInWE6} give the following corollary.

\begin{cor}\label{ImageInCentralizer}
	We have $\text{Im}\Par{\rho}\subset C_{W\Par{E_{6}}}\Par{\Omega}.$
\end{cor}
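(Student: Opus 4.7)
The statement to establish is an immediate consequence of the two propositions that precede it, so no new ideas are needed. The plan is simply to take the intersection of the two containments already proven.

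First, by \Cref{ImageInWE6}, every element of $\text{Im}(\rho)$ lies in $W(E_6)$, viewed as the subgroup of $\text{Aut}(\cohomoldeg{2}{V(w^{3}-f)}{\zz})$ consisting of automorphisms preserving the intersection form (equivalently, permuting the $27$ lines and respecting their incidence). Second, by \Cref{ImageInCentalizerOfOmega}, every element of $\text{Im}(\rho)$ commutes with $\Omega$ inside the ambient group $\text{Aut}(\cohomoldeg{2}{V(w^{3}-f)}{\zz})$. Combining these, any $\gamma \in \text{Im}(\rho)$ is an element of $W(E_6)$ that commutes with $\Omega$, i.e. $\gamma \in C_{W(E_6)}(\Omega)$.

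Formally, one writes
\[\text{Im}(\rho) \;\subseteq\; W(E_6) \;\cap\; C_{\text{Aut}(\cohomoldeg{2}{V(w^{3}-f)}{\zz})}(\Omega) \;=\; C_{W(E_6)}(\Omega),\]
where the equality on the right is the definition of the centralizer inside the subgroup $W(E_6)$. This is just formal set-theoretic intersection, so there is no real obstacle; the substantive content was already done in establishing \Cref{ImageInWE6} and \Cref{ImageInCentalizerOfOmega}. Note that for this to make sense one needs $\Omega$ itself to lie in $W(E_6)$, which holds because $\Omega$ is realized geometrically as the action on $\cohomoldeg{2}{V(w^{3}-f)}{\zz}$ of the automorphism $T$ of $V(w^{3}-f)$, and in particular permutes the $27$ lines.
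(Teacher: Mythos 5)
Your proposal is correct and matches the paper exactly: the corollary is stated there as an immediate consequence of \Cref{ImageInCentalizerOfOmega} and \Cref{ImageInWE6}, obtained by intersecting the two containments just as you do. Your added remark that $\Omega$ itself lies in $W\Par{E_{6}}$ is a reasonable small check that the paper leaves implicit.
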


\subsection{Computing $\Abs{C_{W\Par{E_{6}}}\Par{\Omega}}$}\label{ComputingCentralizerSubsection}

Consider the $W\Par{E_{6}}$-action on itself by conjugation. By the orbit-stabilizer theorem applied to $\Omega$, we have
\[51840=\Abs{W\Par{E_{6}}}=\Abs{\text{Orbit}_{W\Par{E_{6}}}\Par{\Omega}}\cdot \Abs{C_{W\Par{E_{6}}}\Par{\Omega}}\]
and the order of the orbit of $\Omega$ corresponds to the size of its conjugacy class. We will determine the size of this conjugacy class by looking at the character table of $W\Par{E_{6}}$, found for example in \cite{frame}. For this, we must compute the action of $\Omega$ on the set of lines of our chosen base point, the surface $V\Par{w^{3}-f}$ corresponding to the curve \[f=y^{2}z-x^{3}+xz^{2}.\]
With the help of \cite{sagemath}, we enumerate the $27$ lines in $V\Par{w^{3}-f}$. Then, we find a set of six pairwise disjoint lines $L_{1}, \ldots, L_{6}\subset V\Par{w^{3}-f}$, and proceed to compute the intersection pattern of the remaining $21$ lines with each $L_{i}$ in order to compute their cohomology classes. We obtain that for our choice of lines, $\Omega$ acts by
\begin{align*}
	L_{1}&\mapsto L_{5^{\ast}} &
	L_{2}&\mapsto L_{2, 3} &
	L_{3}&\mapsto L_{3, 6} \\
	L_{4}&\mapsto L_{1^{\ast}}  &
	L_{5}&\mapsto L_{4^{\ast}}  &
	L_{6}&\mapsto L_{2, 6}
\end{align*}
following the notation in \Cref{FactsCubics}. Therefore, with respect to the basis $\WBPar{e_{0}, \BPar{L_{1}}, \ldots, \BPar{L_{6}}}$ we have
\[\Omega=\begin{pmatrix*}[r]
4 & 2 & 1 & 1 & 2 & 2 & 1 \\
-1 & -1 & 0 & 0 & 0 & -1 & 0 \\
-2 & -1 & -1 & 0 & -1 & -1 & -1 \\
-2 & -1 & -1 & -1 & -1 & -1 & 0 \\
-1 & -1 & 0 & 0 & -1 & 0 & 0 \\
-1 & 0 & 0 & 0 & -1 & -1 & 0 \\
-2 & -1 & 0 & -1 & -1 & -1 & -1
\end{pmatrix*}\in \text{Aut}\Par{\cohomoldeg{2}{V\Par{w^{3}-f}}{\zz}}.\]
Now consider the complex $W\Par{E_{6}}$-representation given by
\[\cc^{7}\cong \cohomoldeg{2}{V\Par{w^{3}-f}}{\cc}\cong \cohomoldeg{2}{V\Par{w^{3}-f}}{\zz}\otimes \cc\]
This representation contains a copy of the trivial representation, since the canonical class is fixed by $W\Par{E_{6}}$, and a copy of an irreducible $6$-dimensional representation. Namely, 
\[\cohomoldeg{2}{V\Par{w^{3}-f}}{\cc}\cong \cc_{\text{triv}}\oplus V_{6}.\]
Since $\text{Trace}\Par{\Omega}=-2$, at the level of characters we have
\[-2=\chi_{\cohomoldeg{2}{V\Par{w^{3}-f}}{\cc}}\Par{\Omega}=\chi_{\cc_{\text{triv}}}\Par{\Omega}+\chi_{V_{6}}\Par{\Omega}=1+\chi_{V_{6}}\Par{\Omega}\]
and therefore $\chi_{V_{6}}\Par{\Omega}=-3$. There exist two $6$-dimensional irreducible representations of $W\Par{E_{6}}$, both of which have a unique conjugacy class whose character equals $-3$. Therefore, $\Omega$ must belong to this conjugacy class, which has order $80$. Thus, we conclude
\[\Abs{C_{W\Par{E_{6}}}\Par{\Omega}}=\frac{\Abs{W\Par{E_{6}}}}{\Abs{\text{Orbit}_{W\Par{E_{6}}}\Par{\Omega}}}=\frac{51840}{80}=648.\]

\section{Explicit computations for $f=y^{2}z-x^{3}+xz^{2}$}\label{ExplicitComputationsSection}

We have constructed via \Cref{LineTriplesOverFlexes} a correspondence between the inflection points of a smooth cubic curve $V\Par{f}$ and triples of lines in its associated surface $V\Par{w^{3}-f}$ lying over the inflection points of $V\Par{f}$. The goal of this section is to compute this datum explicitly for a choice of $f$ serving as a base point in $\mathcal{U}_{3}$. This datum is then manipulated with the aid of computer software (\cite{sagemath} and \cite{Mathematica}), and used in \Cref{ComputingImagesSection} to explicitly compute generators of $\text{Im}\Par{\rho}$.

\subsection{Inflection points of $f$}\label{InflectionPointsSubsection}
Consider the curve $f=y^{2}z-x^{3}+xz^{2}$. Then
\[\nabla f=\begin{pmatrix}
	-3x^{2}+z^{2} \\
	2yz \\
	y^{2}+2xz
\end{pmatrix}\quad \text{and}\quad \text{Hess}\Par{f}=\begin{pmatrix}
	-6x & 0 & 2z \\
	0 & 2z & 2y \\
	2z & 2y & 2x
\end{pmatrix}.\]
Hence, $\det \text{Hess}\Par{f}=8\Par{3x\Par{y^{2}-xz}-z^{3}}$ and the inflection points of $f$ are given by
\[V\Par{y^{2}z-x^{3}+xz^{2}}\cap V\Par{3xy^{2}-3x^{2}z-z^{3}}.\]
Let $\BPar{x:y:z}$ be an inflection point of $f$.
\begin{itemize}
	\item If $z=0$, then $f=-x^{3}=0$ so $x=0$, and thus $\BPar{x:y:z}=\BPar{0:1:0}$.
	\item If $z\neq 0$, then one may assume $z=1$. Then $y^{2}=x^{3}-x$ and $3xy^{2}-3x^{2}-1=0$. Substituting $y^{2}$ in the second equation we obtain
	\[R\Par{x}=3x^{4}-6x^{2}-1=0.\]
	Hence, the remaining $8$ inflection points are $\BPar{\alpha:\pm \sqrt{\alpha^{3}-\alpha}:1}$ with $\alpha$ a root of $R\Par{x}$. Namely,
	\[\alpha\in \WBPar{\pm a, \pm ia\frac{\Par{\sqrt{3}-1}^{2}}{2}}\quad\text{where }a=\sqrt{\frac{3+2\sqrt{3}}{3}}.\]
	To describe the $y$-coordinate of these inflection points, note that $\alpha^{2}-1=\pm\frac{2\sqrt{3}}{3}$, so
	\[y\in \WBPar{\pm b, \pm ib, \pm b\Par{1+i}\Par{\sqrt{3}-1}, \pm b\Par{1-i}\Par{\sqrt{3}-1}}\quad\text{where }b=\sqrt{a\frac{2\sqrt{3}}{3}}.\]
\end{itemize}
Therefore, the inflection points of $f=y^{2}z-x^{3}+xz^{2}$ are
\begin{center}
	\begin{tabular}{c c c}
		$\BPar{-a:+ib:1}$ & $\BPar{-ia\frac{\Par{\sqrt{3}-1}^{2}}{2}:-b\Par{1+i}\Par{\sqrt{3}-1}:1}$ & $\BPar{+ia\frac{\Par{\sqrt{3}-1}^{2}}{2}:-b\Par{1-i}\Par{\sqrt{3}-1}:1}$ \\
		&&\\
		$\BPar{-a:-ib:1}$ & $\BPar{+ia\frac{\Par{\sqrt{3}-1}^{2}}{2}:+b\Par{1-i}\Par{\sqrt{3}-1}:1}$ & $\BPar{-ia\frac{\Par{\sqrt{3}-1}^{2}}{2}:+b\Par{1+i}\Par{\sqrt{3}-1}:1}$ \\
		&&\\
		$\BPar{0:1:0}$ & $\BPar{+a:+b:1}$ & $\BPar{+a:-b:1}$ \\
		&&\\
	\end{tabular}
\end{center}
and we present them in this way since these inflection points are in direct correspondence with the inflection points of the Hesse normal form $f_{H}$ of $f=y^{2}z-x^{3}+xz^{2}$ as shown in the proof of \Cref{LineTriplesOverFlexes} after transforming $V(f)$ to $V(f_{H})$ via the linear map $A:\pp^{2}\to \pp^{2}$ given by
\[A=\begin{pmatrix}
	1 & 0 & -a \\
	-\frac{\sqrt{3}+1}{2} & -i\sqrt[4]{\frac{3+2\sqrt{3}}{4}} & -a\Par{\frac{\sqrt{3}-1}{2}} \\
	-\frac{\sqrt{3}+1}{2} & i\sqrt[4]{\frac{3+2\sqrt{3}}{4}} & -a\Par{\frac{\sqrt{3}-1}{2}} \\	
\end{pmatrix}.\]
We use this transformation to compute the images of the generators of $\mathcal{H}_{3}\Par{\zz/3\zz}<\pi_{1}\Par{\mathcal{U}_{3}}$, as \cite{dolglib} decribe the action of $\mathcal{H}_{3}\Par{\zz/3\zz}$ in terms on the Hessian form of cubic curves.

\subsection{Lines in $V\Par{w^{3}-f_{\lambda}}$}\label{LinesSubsection}

Consider a family of variations of $f$ parametrized by $\lambda\in \cc\setminus \WBPar{\pm1}$, given by
\[
f_{\lambda}:=y^{2}z-\Par{x-z}\Par{x+z}\Par{x-\lambda z}=y^{2}z-x^{3}+\lambda x^{2}z+xz^{2}-\lambda z^{3}
\]
where $f=f_{0}$. We now proceed to compute the inflection points of $V\Par{f_{\lambda}}$ and the lines in $V\Par{w^{3}-f_{\lambda}}$, following \Cref{InflectionPointsSubsection} and the proof of \Cref{LineTriplesOverFlexes}, respectively. These calculations are crucial for the computation of $\rho\Par{SL_{2}\Par{\zz}}$, as we shal see in \Cref{ImageOfSL2Subsection}. To compute the inflections points of $f_{\lambda}$, we use
\[\nabla f_{\lambda}=\begin{pmatrix}
-3x^{2}+z^{2}+2\lambda xz \\
2yz \\
y^{2}+2xz-3\lambda z^{2}
\end{pmatrix}\quad \text{and}\quad \text{Hess}\Par{f_{\lambda}}=\begin{pmatrix}
-6x+2\lambda z & 0 & 2z+2\lambda x \\
0 & 2z & 2y \\
2z+2\lambda x & 2y & 2x-6\lambda z
\end{pmatrix}.\]
Hence, $\frac{1}{8}\det \text{Hess}\Par{f}=\Par{3x-\lambda z}\Par{y^{2}-xz+3\lambda z^{2}}-z\Par{z+\lambda x}^{2}$ so the inflection points of $f$ are given by
\[V\Par{y^{2}z-x^{3}+\lambda x^{2}z+xz^{2}-\lambda z^{3}}\cap V\Par{\Par{3x-\lambda z}y^{2}-z\Par{x^{2}\Par{3+\lambda^{2}}-8\lambda xz+z^{2}\Par{1+3\lambda^{2}}}}.\]
Let $\BPar{x:y:z}$ be an inflection point.
\begin{itemize}
	\item If $z=0$, then $f=-x^{3}=0$ so $x=0$, and thus $\BPar{x:y:z}=\BPar{0:1:0}$.
	\item If $z\neq 0$, then one may assume $z=1$. Then
	\begin{align*}
	y^{2}&=x^{3}-\lambda x^{2}-x+\lambda \\
	0&=\Par{3x-\lambda}y^{2}-\Par{3+\lambda^{2}}x^{2}+8\lambda x-1-3\lambda^{2}
	\end{align*}
	Substituting $y^{2}$ in the second equation we obtain
	\[R_{\lambda}\Par{x}=3x^{4}-4\lambda x^{3}-6x^{2}+12\lambda x-1-4\lambda^{2}=0.\]
	Hence, the remaining $8$ inflection points are $\BPar{\alpha:\pm \sqrt{\alpha^{3}-\lambda \alpha^{2}-\alpha+\lambda}:1}$ with $\alpha$ a root of $R_{\lambda}\Par{x}$.
\end{itemize}
We may now proceed to compute the triples of lines in $V\Par{w^{3}-f_{\lambda}}$ above each inflection point in $V\Par{f_{\lambda}}$.

\begin{itemize}
	\item At the inflection point $\BPar{0:1:0}$, we have the tangent line is given by $z=0$. This gives \[w^{3}-f_{\lambda}=w^{3}+x^{3}=0\]
	so the three lines lying above $\BPar{0:1:0}$ are
	\[V\Par{w+\omega^{n}x}\cap V\Par{z}\quad \text{ for }n=0, 1, 2.\]
	
	\item At an inflection point of the form $P=\BPar{\alpha:\sqrt{\alpha^{3}-\lambda \alpha^{2}-\alpha+\lambda}:1}$, the tangent line at $P$ is given by
	\[\Par{-3\alpha^{2}+2\lambda \alpha+1}x+\Par{2\sqrt{\alpha^{3}-\lambda \alpha^{2}-\alpha+\lambda}}y+\Par{\alpha^{3}-\lambda\alpha^{2}+\alpha-2\lambda}z=0.\]
	To compute the three lines lying over $P$ in $V\Par{w^{3}-f_{\lambda}}$, we substitute
	\[y^{2}=\frac{\Par{\Par{-3\alpha^{2}+2\lambda \alpha+1}x+\Par{\alpha^{3}-\lambda\alpha^{2}+\alpha-2\lambda}z}^{2}}{4\Par{\alpha^{3}-\lambda \alpha^{2}-\alpha+\lambda}}\]
	in $f_{\lambda}$, and using that $R_{\lambda}\Par{\alpha}=0$ to simplify, we obtain
	\[w^{3}-f_{\lambda}=w^{3}+\Par{x-\alpha z}^{3}=0.\]
	Hence, the lines lying over $P$ are
	\[
	V\Par{w+\omega^{n}\Par{x-\alpha z}}\cap V\Par{\Par{-3\alpha^{2}+2\lambda \alpha+1}x+\Par{2\sqrt{\alpha^{3}-\lambda \alpha^{2}-\alpha+\lambda}}y+\Par{\alpha^{3}-\lambda\alpha^{2}+\alpha-2\lambda}z}
	\]
	for $n=0, 1, 2$.
	
\end{itemize}

\section{Computing generators of $\text{Im}\Par{\rho}$}\label{ComputingImagesSection}

The goal of this section is to use the datum computed in \Cref{ExplicitComputationsSection} to determine explicitly the images under $\rho$ of $4$ elements of $\pi_{1}\Par{\mathcal{U}_{3}}$. The images computed will serve a posteriori as generators of $\text{Im}\Par{\rho}$. The $4$ elements for which we choose to compute their images come from the semidirect product structure of $\pi_{1}\Par{\mathcal{U}_{3}}$, 
\[\pi_{1}\Par{\mathcal{U}_{3}}\cong \mathcal{H}_{3}\Par{\zz/3\zz}\rtimes_{\varphi}SL_{2}\Par{\zz}\] and will, again a posteriori, provide the semidirect product structure of $\text{Im}\Par{\rho}$. 

\subsection{Images of generators of $\mathcal{H}_{3}\Par{\zz/3\zz}$}\label{ImageOfHeissenbergSubsection}
The transformation $A:\pp^{2}\to \pp^{2}$ introduced in \Cref{InflectionPointsSubsection} maps the curve $f=y^{2}z-x^{3}+xz^{2}$ to its Hesse normal form, $f_{H}=x^{3}+y^{3}+z^{3}-3\mu xyz$, where $\mu=\sqrt{3}+1$. The map $A$ induces a map
\[A'=\begin{pmatrix}
A & 0 \\
0 & -\eta
\end{pmatrix}:\pp^{3}\to \pp^{3}\]
where $\eta$ is a cube root of $1-\mu^{3}$. Since $\mu$ is real, we can take $\eta$ to be real for convenience. The map $A'$ maps the surface $V\Par{w^{3}-f}$ to $V\Par{w^{3}-f_{H}}$, and being a linear map, it defines a mapping between the $27$ lines of one surface to the other. Namely, we have
\begin{center}
	\begin{tikzcd}
	V\Par{w^{3}-f}\arrow[rr, "A'"] & & V\Par{w^{3}-f_{H}} \\
	L\arrow[u, hookrightarrow]\arrow[rr, "A'"] & & A'\Par{L}=L_{H}\arrow[u, hookrightarrow]
	\end{tikzcd}
\end{center}
and with the help of \cite{sagemath}, we compute the mapping of the lines as depicted in \Cref{HesseFormLinesMap}. For each surface, a set of $6$ pairwise non-intersecting lines $\{L_{1}, \ldots, L_{6}\}$ is found along with their incidences with the remaining $21$ lines. This allows us to compute the classes of the $27$ lines with respect to the basis $e_{0}, \BPar{L_{1}}, \ldots, \BPar{L_{6}}$ of $\cohomoldeg{2}{V\Par{w^{3}-f}}{\zz}$ and $\cohomoldeg{2}{V\Par{w^{3}-f_{H}}}{\zz}$ respectively.

\begin{figure}[!h]
	\centering
	\begin{tabular}{lll|lll|lll}
		&& & && & && \\
		&$L_{3}$& & &$L_{5, 6}$& & &$L_{4, 5}$& \\
		&$L_{3, 6}$& & &$L_{3, 4}$& & &$L_{5}$& \\
		&$L_{6^{\ast}}$& & &$L_{1, 2}$& & &$L_{4^{\ast}}$& \\
		&& & && & && \\
		\hline
		&& & && & && \\
		&$L_{1^{\ast}}$& & &$L_{2, 6}$& & &$L_{2, 4}$& \\
		&$L_{1, 4}$& & &$L_{2^{\ast}}$& & &$L_{1, 6}$& \\
		&$L_{4}$& & &$L_{6}$& & &$L_{3, 5}$& \\
		&& & && & && \\
		\hline
		&& & && & && \\
		&$L_{1, 3}$& & &$L_{1 }$& & &$L_{3^{\ast}}$& \\
		&$L_{2, 5}$& & &$L_{5^{\ast}}$& & &$L_{2}$& \\
		&$L_{4, 6}$& & &$L_{1, 5}$& & &$L_{2, 3}$& \\
		&& & && & &&
	\end{tabular}
	$\quad\xrightarrow{A'}\quad$
	\begin{tabular}{lll|lll|lll}
		&& & && & && \\
		&$L_{3}$& & &$L_{3, 4}$& & &$L_{4^{\ast}}$& \\
		&$L_{3, 6}$& & &$L_{1, 2}$& & &$L_{4, 5}$& \\
		&$L_{6^{\ast}}$& & &$L_{5, 6}$& & &$L_{5}$ \\
		&& & && & && \\
		\hline
		&& & && & && \\
		&$L_{1^{\ast}}$& & &$L_{6}$& & &$L_{1, 6}$& \\
		&$L_{1, 4}$& & &$L_{2, 6}$& & &$L_{3, 5}$& \\
		&$L_{4}$& & &$L_{2^{\ast}}$& & &$L_{2, 4}$& \\
		&& & && & && \\
		\hline
		&& & && & && \\
		&$L_{1, 3}$& & &$L_{1 }$& & &$L_{3^{\ast}}$& \\
		&$L_{2, 5}$& & &$L_{5^{\ast}}$& & &$L_{2}$& \\
		&$L_{4, 6}$& & &$L_{1, 5}$& & &$L_{2, 3}$& \\
		&& & && & &&
	\end{tabular}
	\caption{Mapping of the lines in $V\Par{w^{3}-f}$ (left) to lines in $V\Par{w^{3}-f_{H}}$ (right) induced by $A'$. The lines are determined by their \emph{position} in the diagram, and each labeled line on the left diagram is mapped to the line with the same label on the right diagram. Each box is in correspondance to the inflection points shown in \Cref{ExplicitComputationsSection}, and lines in each box lie over the same inflection point of $f$ (left) and $f_{H}$ (right). Moreover, each line has a defining equation which depends on a power of $\omega=e^{2\pi i/3}$. Within each box, the powers $\omega^{n}$ in the defining equations of the lines are $1, \omega, \omega^{2}$ from top to bottom.}
	\label{HesseFormLinesMap}
\end{figure}

The purpose of $A'$ is to determine the $\mathcal{H}_{3}\Par{\zz/3\zz}$-action on $V\Par{f}$ from the corresponding $\mathcal{H}_{3}\Par{\zz/3\zz}$-action on $V\Par{f_{H}}$. The later action is explicitly described in \cite{dolglib}. It is given by translation on the lattice of inflection points of $f_{H}$, and it is generated by
\[X=\begin{pmatrix}
1 & 0 & 0 \\
0 & \omega & 0 \\
0 & 0 & \omega^{2} \\
\end{pmatrix}\quad\text{and}\quad
Y=\begin{pmatrix}
0 & 1 & 0 \\
0 & 0 & 1 \\
1 & 0 & 0 \\
\end{pmatrix}
\quad\text{in }PSL_{3}\Par{\cc}.\]
These can be lifted to automorphisms of $V\Par{w^{3}-f_{H}}$ as
\[X'=\begin{pmatrix}
1 & 0 & 0 & 0 \\
0 & \omega & 0 & 0 \\
0 & 0 & \omega^{2} & 0 \\
0 & 0 & 0 & 1
\end{pmatrix}\quad\text{and}\quad
Y'=\begin{pmatrix}
0 & 1 & 0 & 0 \\
0 & 0 & 1 & 0 \\
1 & 0 & 0 & 0 \\
0 & 0 & 0 & 1
\end{pmatrix}
\quad\text{in }PSL_{4}\Par{\cc}\]
and these last two maps provide a permutation on the lines of $V\Par{w^{3}-f_{H}}$. This can be translated into a permutation of the lines in $V\Par{w^{3}-f}$ with the map $A'$. Thus, with the help of \Cref{HesseFormLinesMap} we obtain:

\begin{enumerate}
	\item The map $X'$ induces a permutation $H_{1}\in W\Par{E_{6}}$ which maps
	\begin{align*}
	L_{1}&\mapsto L_{6} &
	L_{2}&\mapsto L_{3, 5} & 
	L_{3}&\mapsto L_{1, 3} \\
	L_{4}&\mapsto L_{6^{\ast}} &
	L_{5}&\mapsto L_{2, 3} &
	L_{6}&\mapsto L_{3, 4}
	\end{align*}
	
	\item The map $Y'$ induces a permutation $H_{2}\in W\Par{E_{6}}$ which maps
	\begin{align*}
	L_{1}&\mapsto L_{3^{\ast}} &
	L_{2}&\mapsto L_{2, 5} &
	L_{3}&\mapsto L_{1, 2} \\
	L_{4}&\mapsto L_{2, 6} &
	L_{5}&\mapsto L_{3} &
	L_{6}&\mapsto L_{2, 4}
	\end{align*}
\end{enumerate}
and therefore, with respect to our basis $\WBPar{e_{0}, \BPar{L_{1}}, \ldots, \BPar{L_{6}}}$ we obtain the matrices
\[H_{1}=\begin{pmatrix*}[r]
3 & 0 & 1 & 1 & 2 & 1 & 1 \\
-1 & 0 & 0 & -1 & -1 & 0 & 0 \\
-1 & 0 & 0 & 0 & -1 & -1 & 0 \\
-2 & 0 & -1 & -1 & -1 & -1 & -1 \\
-1 & 0 & 0 & 0 & -1 & 0 & -1 \\
-1 & 0 & -1 & 0 & -1 & 0 & 0 \\
0 & 1 & 0 & 0 & 0 & 0 & 0
\end{pmatrix*}
\quad\text{and}\quad
H_{2}=\begin{pmatrix*}[r]
3 & 2 & 1 & 1 & 1 & 0 & 1 \\
-1 & -1 & 0 & -1 & 0 & 0 & 0 \\
-2 & -1 & -1 & -1 & -1 & 0 & -1 \\
0 & 0 & 0 & 0 & 0 & 1 & 0 \\
-1 & -1 & 0 & 0 & 0 & 0 & -1 \\
-1 & -1 & -1 & 0 & 0 & 0 & 0 \\
-1 & -1 & 0 & 0 & -1 & 0 & 0
\end{pmatrix*}\]
where
\[\mathbf{H}:=\Bracket{H_{1}, H_{2}}=\rho\Par{\mathcal{H}_{3}\Par{\zz/3\zz}}<W\Par{E_{6}}.\]
With the help of \cite{Mathematica}, we compute the size of the matrix group generated by $H_{1}$ and $H_{2}$, and we obtain that $\Abs{\mathbf{H}}=\Abs{\mathcal{H}_{3}\Par{\zz/3\zz}}=27$. Therefore, $\mathbf{H}\cong \mathcal{H}_{3}\Par{\zz/3\zz}$.

\subsection{Images of elements coming from $SL_{2}\Par{\zz}$}\label{ImageOfSL2Subsection}
Now we use our results from \Cref{LinesSubsection} to compute the image of two elements in $SL_{2}\Par{\zz}<\pi_{1}\Par{\mathcal{U}_{3}}$. Recall that by Hurwitz's theorem, every smooth cubic curve  is realized as a double branched cover of $\pp^{1}$, with branch locus equal to $4$ distinct points $\WBPar{a, b, c, d}\in\pp^{1}$. With a suitable change of coordinates given by a projective linear transformation in $\pp^{2}$, the branch points $\WBPar{a, b, c}$ may be mapped to $\WBPar{\pm 1, \infty}$, and the remaining branch point $d$ is mapped to some $\lambda\in \cc\setminus\WBPar{\pm 1}$. Therefore, a cubic curve can be embedded in $\pp^{2}$ as a curve $f_{\lambda}$ in the family defined in \Cref{LinesSubsection}
\[f_{\lambda}=y^{2}z-\Par{x-z}\Par{x+z}\Par{x-\lambda z}\quad \text{ with }\lambda\in \cc\setminus \WBPar{\pm 1}\]
which is the projectivization of the elliptic curve $y^{2}=\Par{x-1}\Par{x+1}\Par{x-\lambda}$. The curve $f=y^{2}z-x^{3}+xz^{2}$ is in such form letting $\lambda=0$, and the space $\cc\setminus\WBPar{\pm 1}$ parametrizes a subset of curves in $\mathcal{U}_{3}$ containing $f$. A loop based at $0$ in $\cc\setminus\WBPar{\pm 1}$ naturally parametrizes a loop based at $f$ in $\mathcal{U}_{3}$ via the map $\lambda\mapsto f_{\lambda}$. Since every cubic curve $f_{\lambda}$ has an inflection point at $\BPar{0:1:0}$, this inflection point is fixed by any automorphism of $f$ coming from elements in $\pi_{1}\Par{\mathcal{U}_{3}}$ given by loops in $\mathcal{U}_{3}$ consisting of curves of the form $f_{\lambda}$. This implies that for such loops, the action of $\mathcal{H}_{3}\Par{\zz/3\zz}<\pi_{1}\Par{\mathcal{U}_{3}}$ on $\cohomoldeg{2}{V\Par{w^{3}-f}}{\zz}$ is trivial, as there is no translation on the inflection points of $f$. Therefore these loops come from $SL_{2}\Par{\zz}<\pi_{1}\Par{\mathcal{U}_{3}}$. In light of this, we consider the generators of $\pi_{1}\Par{\cc\setminus\WBPar{\pm 1}}$ depicted in \Cref{TwoLoops} along with the corresponding loops in $\mathcal{U}_{3}$ parametrized by these generators. Our goal is now to compute the image under $\rho$ of these loops in $\pi_{1}\Par{\mathcal{U}_{3}}$.

\begin{figure}[!h]
	\centering 
	\begin{tikzpicture}
		\draw[->] (-3, 0) -- (3, 0) coordinate (x axis);
		\draw[->] (0, -1.5) -- (0, 1.5) coordinate (y axis);
		\draw[red, dashed, very thick, ->] (-1, 1) arc (-270:90:1cm);
		\draw[blue, dashed, very thick, ->] (1, -1) arc (-90:270:1cm);
		\filldraw (0, 0) circle (2pt) node[right]{0};
		\draw (-1, 0) node{$\times$};
		\draw (-1, 0) node[above]{$-1$};
		\draw (1, 0) node{$\times$};
		\draw (1, 0) node[above]{$1$};
		\draw (-2, -1) node{$\gamma_{-}$};
		\draw (2, -1) node{$\gamma_{+}$};
		\draw (4, .5) node[right]{$\gamma_{-}:\BPar{0, 1}\to \cc\setminus\WBPar{\pm 1}\quad t\mapsto -1+e^{2\pi it}$};
		\draw (4, -.5) node[right]{$\gamma_{+}:\BPar{0, 1}\to \cc\setminus\WBPar{\pm 1}\quad t\mapsto 1-e^{2\pi it}$};
	\end{tikzpicture}
	\caption{Generators $\gamma_{-}$, $\gamma_{+}$ of $\pi_{1}\Par{\cc\setminus\WBPar{\pm 1}, 0}$.}
	\label{TwoLoops}
\end{figure}

A loop $\gamma\in \pi_{1}\Par{\cc\setminus \WBPar{\pm 1}}$ induces a permutation on the $27$ lines of $V\Par{w^{3}-f}$ as follows: in \Cref{ExplicitComputationsSection} we prove that the $24$ of the lines in $V\Par{w^{3}-f_{\lambda}}$ are given by
\[
V\Par{w+\omega^{n}\Par{x-\alpha z}}\cap V\Par{\Par{-3\alpha^{2}+2\lambda \alpha+1}x+\Par{2\sqrt{\alpha^{3}-\lambda \alpha^{2}-\alpha+\lambda}}y+\Par{\alpha^{3}-\lambda\alpha^{2}+\alpha-2\lambda}z}
\]
and the remaining $3$ are given by $V\Par{w+\omega^{n}x}\cap V\Par{z}$. Letting $\lambda=\gamma\Par{t}$, $\alpha$ and $\lambda$ vary continuously in $t$. Indeed, $\alpha$ is a root of $R_{\lambda}\Par{x}$, a quartic polynomial whose coefficients are polynomials in $\lambda$. This implies that along the surfaces $V\Par{w^{3}-f_{\lambda}}$, these lines vary continuously. For a line $L\subset V\Par{w^{3}-f}$, let $L\Par{\gamma, t}$ be the line in $V\Par{w^{3}-f_{\gamma\Par{t}}}$ obtained by varying $L$ along $\gamma$. Thus, $\gamma$ induces a permutation given by
\[L\Par{\gamma, 0}\mapsto L\Par{\gamma, 1}.\]
To compute these permutations explicitly, we recall that lines in $V\Par{w^{3}-f_{\lambda}}$ lie over the inflection points of $f_{\lambda}$ in triples. The lines $V\Par{w+\omega^{n}x}\cap V\Par{z}$ are fixed along any path $\gamma$, as the coefficients of their defining equations are constant functions of $\lambda$. Thus, it remains to study what happens to the remaining $8$ triples of lines. Let $p\Par{\gamma, t}$ be the inflection point of $f_{\gamma\Par{t}}$ over which the line $L\Par{\gamma, t}$ lies. Then, we have the following proposition.

\begin{prop}\label{FlexPointsMoveLines}
	The permutation $L\Par{\gamma, 0}\mapsto L\Par{\gamma, 1}$ induces a permutation $p\Par{\gamma, 0}\mapsto p\Par{\gamma, 1}$ on the inflection points of $f$. Moreover, the permutation $L\Par{\gamma, 0}\mapsto L\Par{\gamma, 1}$ is completely determined by its induced permutation $p\Par{\gamma, 0}\mapsto p\Par{\gamma, 1}$.
\end{prop}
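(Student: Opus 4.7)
My plan is to prove the two assertions separately: the first via a straightforward continuity argument, and the second by tracking an intrinsic $\zz/3\zz$-label carried by each line.

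First I would establish that the inflection points of $V\Par{f_\lambda}$ vary continuously with $\lambda$. Indeed, they are the common zeros of $f_\lambda$ and $\det\text{Hess}\Par{f_\lambda}$, and these polynomials have coefficients depending polynomially on $\lambda$. Along the loop $\gamma$, which avoids $\pm 1$, the curve $V\Par{f_\lambda}$ remains smooth and its nine inflection points stay distinct, so they trace nine continuous paths in $\pp^{2}$ and end up permuted among themselves at $t = 1$. By \Cref{LineTriplesOverFlexes}, the three lines $L(\gamma, t)$ lying above an inflection point $p(\gamma, t)$ are concurrent at $p^{-1}\Par{p(\gamma, t)}$, so they must travel together along $\gamma$. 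This shows that the assignment $p(\gamma, 0) \mapsto p(\gamma, 1)$ is well-defined, giving the claimed permutation on inflection points.

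For the second assertion, I would use the explicit formulas derived in \Cref{LinesSubsection}. Every line above an inflection point $p(\gamma, t) \neq [0:1:0]$ has the form
\[V\Par{w + \omega^{n}\Par{x - \alpha(t)z}} \cap V\Par{\ell_{p(\gamma, t)}}\]
for some $n \in \WBPar{0, 1, 2}$, where $\ell_{p(\gamma, t)}$ is the tangent line to $V\Par{f_{\gamma(t)}}$ at $p(\gamma, t)$; likewise the three lines above $[0:1:0]$ are $V\Par{w + \omega^{n}x} \cap V\Par{z}$. As $t$ varies along $\gamma$ the quantities $\alpha(t)$, $\ell_{p(\gamma, t)}$, and the inflection points themselves vary continuously, but the coefficient $\omega^{n}$ is a fixed cube root of unity in $\cc$ independent of $t$. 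Thus the discrete label $n$ is preserved along the entire continuation, and the line labeled $n$ over $p(\gamma, 0)$ is carried to the line labeled $n$ over $p(\gamma, 1)$. Since the nine inflection points of $f$ together with their three $n$-labels account for all $27$ lines, the line permutation is indeed determined by the inflection point permutation.

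The main obstacle to watch for is whether the labels $n$ at $t = 0$ and $t = 1$ really do refer to the same discrete parameter or could be twisted by a $\zz/3\zz$-shift accumulated along $\gamma$. This is resolved by the observation above that $\omega^{n}$ is a constant in $\cc$, not a continuously varying quantity that could acquire monodromy. Equivalently, the three lines above each inflection point form a $\zz/3\zz$-torsor under the deck action $T$, and the explicit $\omega^{n}$-labeling trivializes this torsor uniformly in $t$, so no twist can be accumulated. Once this is verified, the inflection point permutation fully determines the line permutation, completing the proof.
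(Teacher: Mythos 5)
Your proof is correct and takes essentially the same route as the paper's: the three lines over an inflection point travel together along $\gamma$, giving the induced permutation on inflection points, and the coefficient $\omega^{n}$ is a constant (discrete) quantity along the deformation, so the pair consisting of an inflection point and the label $n$ determines the line. The paper phrases this last step via the $y$-coefficient of the tangent line distinguishing the defining equations rather than your torsor-trivialization language, but the content is identical.
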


\begin{proof}
	Since any loop $\gamma$ induces an automorphism of $V\Par{f_{\lambda}}$, the permutation $L\Par{\gamma, 0}\mapsto L\Par{\gamma, 1}$ induces a permutation $p\Par{\gamma, 0}\mapsto p\Par{\gamma, 1}$ by restriction to the inflection point of $f$ contained in each line. This implies that the lines over each inflection point $p\Par{\gamma, 0}$ in $f$ are mapped to the lines over $p\Par{\gamma, 1}$.
	
	Moreover, in the defining equation $w+\omega^{n}\Par{x-\alpha z}$ of any of these $24$ lines, the $x$-coefficient $\omega^{n}$ is a constant function of $\lambda$.  Therefore this coefficient remains constant in the defining equations of $L\Par{\gamma, 0}$ and $L\Par{\gamma, 1}$ for any line $L$ and any loop $\gamma$. The permutation on the lines is simply determined by the defining equation
	\[V\Par{\Par{-3\alpha^{2}+2\lambda \alpha+1}x\pm\Par{2\sqrt{\alpha^{3}-\lambda \alpha^{2}-\alpha+\lambda}}y+\Par{\alpha^{3}-\lambda\alpha^{2}+\alpha-2\lambda}z}.\]
	In particular, the $y$-coordinate of each of the $8$ inflection points of $f$ distinct from $\BPar{0:1:0}$ distinguishes each inflection point. It is also a scalar multiple of the $y$-coefficient in the defining equations of the $24$ lines over the inflection points distinct from $\BPar{0:1:0}$, and this coefficient distinguishes such defining equations. Therefore the defining equation of our lines changes according to this coefficient, which changes according to the inflection points of $f$.
	This shows that the induced permutation $L\Par{\gamma, 0}\mapsto L\Par{\gamma, 1}$ is completely determined by its associated permutation on the inflection points $p\Par{\gamma, 0}\mapsto p\Par{\gamma, 1}$.
\end{proof}

Hence it suffices to study the permutation $p\Par{\gamma, 0}\mapsto p\Par{\gamma, 1}$. To do so, we simply study the permutation of the $y$-coordinates of these inflection points. We compute this with the help of \cite{Mathematica} as follows:

Given a path $\gamma$, $y$-coordinate of the inflection points of $f_{\gamma\Par{t}}$ distinct from $\BPar{0:1:0}$ are determined by the roots of the polynomial $R_{\gamma\Par{t}}$, which are the $x$-coordinates of these inflection points. We plot
\begin{align*}
\mathcal{X}_{N}\Par{\gamma}&:=\WBPar{\Par{x, \frac{k}{N}}\in \cc\times \BPar{0, 1}\mid k=0, 1, \ldots, N\text{ and }\BPar{x:y:1}\text{ is an inflection point of }f_{\gamma\Par{\frac{k}{N}}}} \\
\mathcal{Y}_{N}\Par{\gamma}&:=\WBPar{\Par{y, \frac{k}{N}}\in \cc\times \BPar{0, 1}\mid k=0, 1, \ldots, N\text{ and }\BPar{x:y:1}\text{ is an inflection point of }f_{\gamma\Par{\frac{k}{N}}}}
\end{align*}
Uniform continuity of the map $t\mapsto p\Par{\gamma, t}$ guarantees that for a sufficiently large integer $N$, the distance between the $y$-coordinates corresponding to the inflection points $p\Par{\gamma, \frac{k}{N}}$ and $p\Par{\gamma, \frac{k+1}{N}}$ is bounded by a fixed $\varepsilon>0$ for every $k$. Therefore the $y$-coordinates of the points $p\Par{\gamma, \frac{k}{N}}$ can be determined from the starting point $p\Par{\gamma, 0}$. This is illustrated for the curves $\gamma_{-}$ and $\gamma_{+}$ in \Cref{PathRootsR} and \Cref{PathGamma}.
\begin{figure}[!h]
	\centering
	\includegraphics[scale=.43]{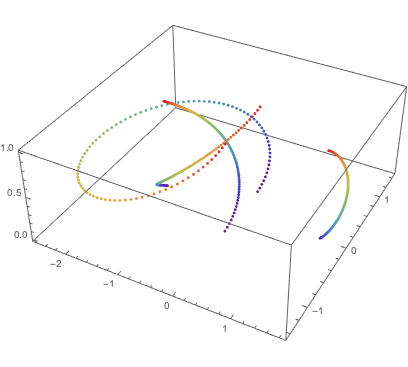}
	\includegraphics[scale=.4]{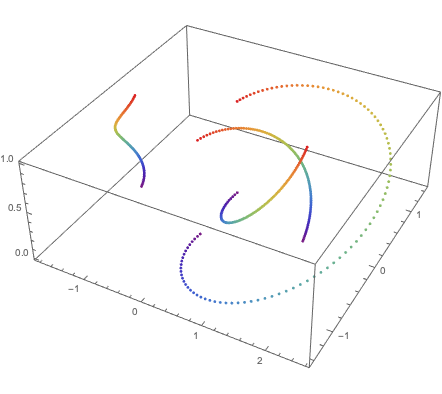}
	\caption{Plots of $\mathcal{X}_{N}\Par{\gamma_{-}}$ (left) and $\mathcal{X}_{N}\Par{\gamma_{+}}$ (right) for $N=100$.}
	\label{PathRootsR}
\end{figure}

\begin{figure}[!h]
	\centering
	\includegraphics[scale=.43]{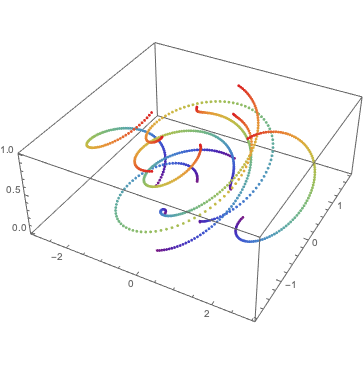}
	\includegraphics[scale=.6]{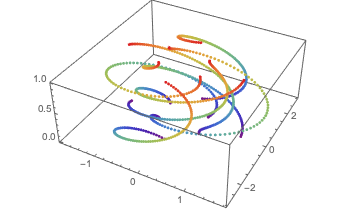}
	\caption{Plots of $\mathcal{Y}_{N}\Par{\gamma_{-}}$ (left) and $\mathcal{Y}_{N}\Par{\gamma_{+}}$ (right) for $N=100$.}
	\label{PathGamma}
\end{figure}
The permutations on the roots of $R_{0}$ induced by $\gamma_{-}$ and $\gamma_{+}$ are given as in \Cref{XGamma}, and the permutations on $y$-coordinates of the inflection points of $f_{0}=f$ induced by $\gamma_{-}$ and $\gamma_{+}$ are given as in \Cref{YGamma}.

\begin{figure}[!h]
	\centering 
	\begin{tikzpicture}
	\draw[->] (-3.5, 0) -- (3.5, 0) coordinate (x axis);
	\draw[->] (0, -1.5) -- (0, 1.5) coordinate (y axis);
	\filldraw (2*1.4678, 0) circle (2pt) node(1)[above]{$a$};
	\filldraw (2*-1.4678, 0) circle (2pt) node(2)[above]{$-a$};
	\filldraw (0, 2*0.3933) circle (2pt) node(3)[right]{$ia\frac{\Par{\sqrt{3}-1}^{2}}{2}$};
	\filldraw (0, 2*-0.3933) circle (2pt) node(4)[right]{$-ia\frac{\Par{\sqrt{3}-1}^{2}}{2}$};
	\draw[->, thick, red] (2) to[out=90, in=180] (3);
	\draw[->, thick, red] (3) to[out=270, in=90] (4);
	\draw[->, thick, red] (4) to[out=180, in=0] (2);
	\end{tikzpicture}\quad\quad\quad
	\begin{tikzpicture}
	\draw[->] (-3.5, 0) -- (3.5, 0) coordinate (x axis);
	\draw[->] (0, -1.5) -- (0, 1.5) coordinate (y axis);
	\filldraw (2*1.4678, 0) circle (2pt) node(1)[above]{$a$};
	\filldraw (2*-1.4678, 0) circle (2pt) node(2)[above]{$-a$};
	\filldraw (0, 2*0.3933) circle (2pt) node(3)[right]{$ia\frac{\Par{\sqrt{3}-1}^{2}}{2}$};
	\filldraw (0, 2*-0.3933) circle (2pt) node(4)[right]{$-ia\frac{\Par{\sqrt{3}-1}^{2}}{2}$};
	\draw[->, thick, blue] (1) to[out=270, in=0] (4);
	\draw[->, thick, blue] (4) to[out=90, in=270] (3);
	\draw[->, thick, blue] (3) to[out=0, in=180] (1);
	\end{tikzpicture}
	\caption{Permutation of the roots of $R_{0}$ induced by $\gamma_{-}$ (left) and $\gamma_{+}$ (right)}
	\label{XGamma}
\end{figure}

\begin{figure}[!h]
	\centering 
	\begin{tikzpicture}
	\draw[->] (-6, 0) -- (6, 0) coordinate (x axis);
	\draw[->] (0, -3.5) -- (0, 3.5) coordinate (y axis);
	\filldraw (2.5*1.3019, 0) circle (2pt) node(1)[above]{$b$};
	\filldraw (2.5*-1.3019, 0) circle (2pt) node(2)[above]{$-b$};
	\filldraw (0, 2.5*1.3019) circle (2pt) node(3)[right]{$ib$};
	\filldraw (0, 2.5*-1.3019) circle (2pt) node(4)[right]{$-ib$};
	\filldraw (2.5*0.6739, 2.5*0.6739) circle (2pt) node(5)[right]{$b\Par{1+i}\Par{\sqrt{3}-1}$};
	\filldraw (2.5*-0.6739, 2.5*-0.6739) circle (2pt) node(6)[left]{$-b\Par{1+i}\Par{\sqrt{3}-1}$};
	\filldraw (2.5*0.6739, 2.5*-0.6739) circle (2pt) node(7)[right]{$b\Par{1-i}\Par{\sqrt{3}-1}$};
	\filldraw (2.5*-0.6739, 2.5*0.6739) circle (2pt) node(8)[left]{$-b\Par{1-i}\Par{\sqrt{3}-1}$};
	\draw[->, thick, red] (4) to[out=90, in=330] (8);
	\draw[->, thick, red] (8) to[out=0, in=180] (5);
	\draw[->, thick, red] (5) to[out=210, in=60] (4);
	\draw[->, thick, red] (3) to[out=270, in=150] (7);
	\draw[->, thick, red] (7) to[out=180, in=0] (6);
	\draw[->, thick, red] (6) to[out=30, in=240] (3);
	\end{tikzpicture}
	\begin{tikzpicture}
	\draw[->] (-6, 0) -- (6, 0) coordinate (x axis);
	\draw[->] (0, -3.5) -- (0, 3.5) coordinate (y axis);
	\filldraw (2.5*1.3019, 0) circle (2pt) node(1)[above]{$b$};
	\filldraw (2.5*-1.3019, 0) circle (2pt) node(2)[above]{$-b$};
	\filldraw (0, 2.5*1.3019) circle (2pt) node(3)[right]{$ib$};
	\filldraw (0, 2.5*-1.3019) circle (2pt) node(4)[right]{$-ib$};
	\filldraw (2.5*0.6739, 2.5*0.6739) circle (2pt) node(5)[right]{$b\Par{1+i}\Par{\sqrt{3}-1}$};
	\filldraw (2.5*-0.6739, 2.5*-0.6739) circle (2pt) node(6)[left]{$-b\Par{1+i}\Par{\sqrt{3}-1}$};
	\filldraw (2.5*0.6739, 2.5*-0.6739) circle (2pt) node(7)[right]{$b\Par{1-i}\Par{\sqrt{3}-1}$};
	\filldraw (2.5*-0.6739, 2.5*0.6739) circle (2pt) node(8)[left]{$-b\Par{1-i}\Par{\sqrt{3}-1}$};
	\draw[->, thick, blue] (1) to[out=180, in=0] (6);
	\draw[->, thick, blue] (6) to[out=30, in=330] (8);
	\draw[->, thick, blue] (8) to[out=0, in=0] (1);
	\draw[->, thick, blue] (2) to[out=0, in=180] (5);
	\draw[->, thick, blue] (5) to[out=210, in=150] (7);
	\draw[->, thick, blue] (7) to[out=180, in=180] (2);
	\end{tikzpicture}
	\caption{Permutation on the $y$-coordinates of inflection points of $f$ induced by $\gamma_{-}$ (above) and $\gamma_{+}$ (below).}
	\label{YGamma}
\end{figure}

\begin{enumerate}
	\item The loop $\gamma_{-}$ induces a permutation $G_{1}\in W\Par{E_{6}}$ which maps
	\begin{align*}
	L_{1}&\mapsto L_{1} &
	L_{2}&\mapsto L_{2} & 
	L_{3}&\mapsto L_{4, 5} \\
	L_{4}&\mapsto L_{6} &
	L_{5}&\mapsto L_{3, 4} &
	L_{6}&\mapsto L_{3, 5}
	\end{align*}
	
	\item The loop $\gamma_{+}$ induces a permutation $G_{2}\in W\Par{E_{6}}$ which maps
	\begin{align*}
	L_{1}&\mapsto L_{5, 6} &
	L_{2}&\mapsto L_{1, 6} & 
	L_{3}&\mapsto L_{3} \\
	L_{4}&\mapsto L_{4} &
	L_{5}&\mapsto L_{2} &
	L_{6}&\mapsto L_{1, 5}
	\end{align*}
\end{enumerate}
and therefore, with respect to our basis $\WBPar{e_{0}, \BPar{L_{1}}, \ldots, \BPar{L_{6}}}$ we obtain the matrices
\[G_{1}=\begin{pmatrix*}[r]
2 & 0 & 0 & 1 & 0 & 1 & 1 \\
0 & 1 & 0 & 0 & 0 & 0 & 0 \\
0 & 0 & 1 & 0 & 0 & 0 & 0 \\
-1 & 0 & 0 & 0 & 0 & -1 & -1 \\
-1 & 0 & 0 & -1 & 0 & -1 & 0 \\
-1 & 0 & 0 & -1 & 0 & 0 & -1 \\
0 & 0 & 0 & 0 & 1 & 0 & 0
\end{pmatrix*}
\quad\text{and}\quad 
G_{2}=\begin{pmatrix*}[r]
2 & 1 & 1 & 0 & 0 & 0 & 1 \\
-1 & 0 & -1 & 0 & 0 & 0 & -1 \\
0 & 0 & 0 & 0 & 0 & 1 & 0 \\
0 & 0 & 0 & 1 & 0 & 0 & 0 \\
0 & 0 & 0 & 0 & 1 & 0 & 0 \\
-1 & -1 & 0 & 0 & 0 & 0 & -1 \\
-1 & -1 & -1 & 0 & 0 & 0 & 0
\end{pmatrix*}\]
where
\[\mathbf{G}:=\Bracket{G_{1}, G_{2}}<\rho\Par{SL_{2}\Par{\zz}}<W\Par{E_{6}}.\]
With the help of \cite{Mathematica}, we compute the order of the matrix group generated by $G_{1}$ and $G_{2}$, and we obtain $\Abs{\mathbf{G}}=24=2^{3}\cdot 3$. Moreover, we make the following observations about $\mathbf{G}$:
\begin{itemize}
	\item $\BPar{G_{1}, G_{2}}\neq 1$, so $\mathbf{G}$ is not abelian.
	\item $G_{1}^{3}=G_{2}^{3}=1$ and $\Bracket{G_{1}}\neq \Bracket{G_{2}}$, so $3$-Sylow subgroups of $\mathbf{G}$ are not normal.
	\item $\text{ord}\Par{G_{1}G_{2}}=6$ and $\text{ord}\Par{\BPar{G_{1}, G_{2}}}=4$, so $\mathbf{G}$ contains elements of order $4$ and $6$.
\end{itemize}
Using the classification of groups of order $24$ in \cite{Grp24}, the first $3$ observations reduce our posibilities for $\mathbf{G}$ to $SL_{2}\Par{\zz/3\zz}$, $S_{4}$ and $A_{4}\times \zz/2\zz$. Since $S_{4}$ has no elements of order $6$ and $A_{4}\times \zz/2\zz$ has no elements of order $4$, we conclude $\mathbf{G}\cong SL_{2}\Par{\zz/3\zz}$.

\section{Proof of the Main Theorem}\label{MainProofSection}
Now that we have computed explicitly $H_{1}, H_{2}, G_{1}, G_{2}$ inside $\text{Im}\Par{\rho}$, we are ready to conclude. Recall the groups $\mathbf{H}, \mathbf{G}<\text{Im}\Par{\rho}$ are defined in \Cref{ComputingImagesSection} and we have shown
\[\mathbf{H}:=\Bracket{H_{1}, H_{2}}\cong \mathcal{H}_{3}\Par{\zz/3\zz}\quad\text{ and }\quad\mathbf{G}:=\Bracket{G_{1}, G_{2}}\cong SL_{2}\Par{\zz/3\zz}.\]
Consider the subgroup of $\text{Im}\Par{\rho}$ generated by $H_{1}, H_{2}, G_{1}, G_{2}$, \[\mathbf{I}:=\Bracket{H_{1}, H_{2}, G_{1}, G_{2}}.\]
We will show $\mathbf{I}\cong \text{Im}\Par{\rho}$ in order to conclude.

\begin{proof}[Proof of \Cref{MainTheorem}]
	 With the help of \cite{Mathematica}, we compute the intersection of the matrix groups $\mathbf{H}$ and $\mathbf{G}$ and obtain that $\mathbf{H}\cap \mathbf{G}=\WBPar{1}$. Conjugating $H_{1}$ and $H_{2}$ by $G_{1}$ and $G_{2}$ we obtain
	\begin{align*}
		G_{1}H_{1}G_{1}^{-1}&=\Omega H_{1}H_{2} &  G_{2}H_{1}G_{2}^{-1}&=H_{1} \\
		G_{1}H_{2}G_{1}^{-1}&=H_{2} & G_{2}H_{2}G_{2}^{-1}&=\Omega^{-1}H_{1}^{-1}H_{2}.
	\end{align*}
	Therefore, $\mathbf{H}$ is normal in $\mathbf{I}$. These two facts together imply that
	\[\mathbf{I}\cong \mathbf{H}\rtimes \mathbf{G}\cong \mathcal{H}_{3}\Par{\zz/3\zz}\rtimes SL_{2}\Par{\zz/3\zz}.\]
	The homomorphism $\varphi:SL_{2}\Par{\zz/3\zz}\to \text{Aut}\Par{\mathcal{H}_{3}\Par{\zz/3\zz}}$ described in \Cref{RestrictingImageSection} completely describes this semidirect product structure, and an explicit isormorphism $\mathbf{I}\to \mathcal{H}_{3}\Par{\zz/3\zz}\rtimes_{\varphi} SL_{2}\Par{\zz/3\zz}$ is given by
\begin{align*}
H_{1}&\mapsto \Par{\begin{pmatrix}
	1 & 1 & 1 \\
	0 & 1 & 0 \\
	0 & 0 & 1
	\end{pmatrix}, I}\quad
G_{1}\mapsto \Par{I, \begin{pmatrix}
	1 & 0 \\
	1 & 1
	\end{pmatrix}} \\
H_{2}&\mapsto \Par{\begin{pmatrix}
	1 & 0 & 1 \\
	0 & 1 & 1 \\
	0 & 0 & 1
	\end{pmatrix}, I}\quad
G_{2}\mapsto \Par{I, \begin{pmatrix}
	1 & 2 \\
	0 & 1
	\end{pmatrix}}.
\end{align*}
Since $\Abs{\mathbf{I}}\leq \Abs{\text{Im}\Par{\rho}}\leq\Abs{C_{W\Par{E_{6}}}\Par{\Omega}}=648$ and
\[\Abs{\mathbf{I}}=\Abs{\mathbf{H}}\cdot \Abs{\mathbf{G}}=\Abs{\mathcal{H}_{3}\Par{\zz/3\zz}}\cdot \Abs{SL_{2}\Par{\zz/3\zz}}=27\cdot 24=648\]
we have that the equalities must hold. By \Cref{ImageInCentralizer}, we have that $\text{Im}\Par{\rho}\subset C_{W\Par{E_{6}}}\Par{\Omega}$, so we conclude
\[\mathbf{I}\cong \text{Im}\Par{\rho}\cong C_{W\Par{E_{6}}}\Par{\Omega}\cong \mathcal{H}_{3}\Par{\zz/3\zz}\rtimes_{\varphi} SL_{2}\Par{\zz/3\zz}.\]
\end{proof}

\printbibliography
\vspace{.5cm}
\noindent Department of Mathematics, University of Chicago\\
E-mail: \href{mailto:amedrano@math.uchicago.edu}{amedrano@math.uchicago.edu}
\end{document}